\documentclass[reqno,a4paper]{amsart}
\usepackage{amsmath,amstext,amssymb,amsfonts,amscd,amsthm}
\usepackage{geometry}
\usepackage{esint}
\usepackage{hyperref}
\hypersetup{pdfpagemode=FullScreen,colorlinks=true,linkcolor=blue,citecolor=blue,urlcolor=orange}
\usepackage{mathdots,mathrsfs,enumerate}
\usepackage{subfigure}
\usepackage{extarrows}
\usepackage{dsfont}
\usepackage{graphicx,color}
\usepackage{tikz}
\usetikzlibrary{3d,calc,patterns,graphs,arrows}
\usepackage{tikz-cd,array,diagbox}
\numberwithin{equation}{section}

\newenvironment{proof*}{\noindent{\heiti Proof}}{\hfill\qed}
\newtheorem{definition}{Definition}[section]

\newtheorem{lemma}{Lemma}[section]
\newtheorem{theorem}{Theorem}[section]

\newtheorem{proposition}{Proposition}[section]

\allowdisplaybreaks[3]
\usepackage{cite}

\begin{document}
	
	\title{Exterior Dirichlet Problems for Hessian Quotient Equations of Mixed Type}
	
	\author{Yu Lei}
	\address{School of Mathematics and Center for Nonlinear Studies, Northwest University, Xi'an, 710127, PR China}
	\email{leiyu@stumail.nwu.edu.cn}
	
	\author{Zhisu Li}
	\address{School of Mathematics and Center for Nonlinear Studies, Northwest University, Xi'an, 710127, PR China}
	\email{lizhisu@nwu.edu.cn}
	
	\maketitle
	
	\begin{abstract}
		We study the exterior Dirichlet problem for the mixed Hessian quotient equation
		\[
		\frac{\sigma_k(\eta(D^2 u))}{\sigma_l(\eta(D^2 u))} = 1, 
		\]
		where $\eta(M) = (\operatorname{tr} M)I - M$. 
		We establish existence and uniqueness of smooth admissible solutions with prescribed quadratic asymptotics at infinity, 
		and obtain full derivative decay of the remainder. 
		The proof relies on a three-stage subsolution construction.
	\end{abstract}
	
	\textbf{Key words.}
	Mixed Hessian quotient equations, exterior Dirichlet problem, admissible solutions, subsolution construction
	
	\textbf{AMS subject classifications.}
	35J15; 35J25; 35J60; 35J96
	
	\section{Introduction}
	In this paper, we study the exterior Dirichlet problem for the mixed Hessian quotient equation
	\begin{equation}
		\begin{cases}
			\dfrac{\sigma_k(\eta(D^2 u))}{\sigma_l(\eta(D^2 u))} = 1, & \text{in } \mathbb{R}^n \setminus \overline{D}, \\[6pt]
			u = \varphi, & \text{on } \partial D,
		\end{cases}
		\label{eq:main}
	\end{equation}
	where $D \subset \mathbb{R}^n$ is a bounded domain and $\varphi \in C^\infty(\partial D)$.
	
	Let $\lambda(D^2 u) = (\lambda_1, \dots, \lambda_n)$ denote the eigenvalues of the Hessian matrix $D^2 u$ of a smooth function $u$ on $\mathbb{R}^n$. 
	For $1 \le k \le n$, the $k$-th elementary symmetric function is defined by
	\[
	\sigma_k(\lambda) = \sum_{1 \le i_1 < \dots < i_k \le n} \lambda_{i_1} \cdots \lambda_{i_k}.
	\]
	For a symmetric matrix $M$, we write $\sigma_k(M) = \sigma_k(\lambda(M))$.
	Define the linear operator $\eta$ on symmetric matrices by
	\[
	\eta(M) = (\operatorname{tr} M) I - M.
	\]
	If $\lambda(D^2 u) = (\lambda_1, \dots, \lambda_n)$, then $\eta(D^2 u)$ has eigenvalues
	\[
	\eta_i = \sum_{j \neq i} \lambda_j, \qquad i = 1, \dots, n.
	\]
	
	The operator $\sigma_k(\eta(D^2 u))$ arises naturally in various geometric contexts. When $l=0$, it appears as a ``form-type'' Calabi--Yau equation proposed by Fu--Wang--Wu~\cite{FW10} and studied by Tosatti--Weinkove~\cite{TW17} on K\"ahler manifolds. In prescribed curvature problems, the same mixed Hessian structure appears through
	\[
	\eta_{ij} = H g_{ij} - h_{ij},
	\]
	where $h_{ij}$ and $H$ denote the second fundamental form and the mean curvature of a hypersurface; see Chu--Jiao~\cite{CJ21}.
	
	The mixed quotient equation $\sigma_k(\eta(D^2 u)) / \sigma_l(\eta(D^2 u)) = 1$ is a natural generalization of the Hessian quotient equation
	\[
	\frac{\sigma_k(\lambda(D^2 u))}{\sigma_l(\lambda(D^2 u))} = 1,
	\]
	which has been extensively studied in the literature; when $k=n$ and $l=0$, it further reduces to the Monge--Amp\`ere equation $\det(D^2 u) = 1$. 
	
	The mixed quotient equation $$\dfrac{\sigma_k(\eta(D^2 u))}{\sigma_l(\eta(D^2 u))} = f$$ was studied by Chen--Dong--Han~\cite{CDH22}, who established interior Hessian estimates and Pogorelov type estimates, which in particular imply a Liouville theorem. The Dirichlet problem on bounded domains for the equation with a gradient-dependent right-hand side was later considered by Chen--Tu--Xiang~\cite{CTX23} on Riemannian manifolds.  However, is restricted to the range $l+2 \le k \le n$, a condition imposed by the algebraic structure of the quotient operator in their boundary estimates. In this paper, we overcome this limitation and establish existence and uniqueness for the exterior problem with no restriction on the gap $k-l$. The critical case $l = n-1$ is treated separately using a refined expansion of $\sigma_{n-1}$ for block matrices, which may be of independent interest.
	
	The exterior Dirichlet problem for fully nonlinear elliptic equations has a rich history. For the Monge--Amp\`ere equation $\det(D^2u)=1$, Caffarelli and Li~\cite{CL03} proved that any convex viscosity solution outside a bounded domain must be asymptotic to a quadratic polynomial at infinity, and established existence and uniqueness for the exterior Dirichlet problem. This result was later extended by Bao--Li--Li~\cite{BLL14} to the $k$-Hessian equation $\sigma_k(\lambda(D^2u))=1$, and by Li and Li~\cite{LL18} to the Hessian quotient equation $\sigma_k(\lambda(D^2u))/\sigma_l(\lambda(D^2u))=1$. However, all these works require the domain $D$ to be strictly convex, and the solutions obtained are viscosity solutions.
	Recently, Li and Xiao~\cite{LYX26} proved the existence of smooth solutions for the exterior Dirichlet problem of the $k$-Hessian equation on non-convex domains, extending the previous result of Bao--Li--Li~\cite{BLL14} for strictly convex domains.The special Lagrangian equation
	\[
	\sum_{i=1}^n \arctan \lambda_i(D^2u) = \Theta,
	\]
	it can be written as the Hessian quotient equation when $l=1$ and $k=n=3$:
	\[
	\det(D^2u)= \triangle u.
	\]
	The exterior Dirichlet problem for the special Lagrangian equation was studied by  Li~\cite{Li19} and Li--Li--Yuan~\cite{LLY20}, who established quadratic asymptotics for solutions of the special Lagrangian equation with supercritical phases $|\Theta|>(n-2)\pi/2$ in exterior domains.
	
	For Hessian quotient equations and special Lagrangian equations in exterior domains, the subsolution construction typically yields a decay rate $|x|^{m-2}$ with $m\in(2,n]$ depending on the algebraic structure of the equation, and requires additional technical restrictions on the asymptotic matrix $A$ (see Li and Li \cite{LL18} for Hessian quotient equations and Li \cite{Li19} for special Lagrangian equations). In contrast, by applying the exterior Liouville theorem of Li--Li--Yuan \cite{LLY20}, we establish the optimal decay $|x|^{n-2}$ for the mixed Hessian quotient equation \eqref{eq:main}, with full decay estimates for all derivatives of the remainder, under the natural condition $A\in\mathcal{A}_{k,l}$ without any further assumptions.
	
	\begin{theorem}
		Let $D \subset \mathbb{R}^n$be a bounded $C^\infty$ domain with boundary data $\varphi \in C^\infty(\partial D)$,$n\geq3$. Then for any given $A \in \mathcal{A}_{k,l}$ with  $0\leq l<k<n$ and any $b\in \mathbb{R}^n$, where
		\[
		\mathcal{A}_{k,l} = \{\, A \mid A \text{ is a real } n\times n \text{ symmetric positive definite matrix with } \sigma_k(\eta(A)) = \sigma_l(\eta(A)) \,\},
		\]
		there exists $c_* = c_*(n,k,l,b,A,\partial D,\|\varphi\|_{C^2(\partial D)})$ such that for every $c > c_*$, the exterior Dirichlet problem \eqref{eq:main}	admits a unique strictly $\widetilde{\Gamma}_k$-admissible solution
		\[
		u \in C^\infty(\mathbb{R}^n \setminus \overline{D}) \cap C^0(\mathbb{R}^n \setminus D),
		\]
		satisfying
		\[
		\lim_{|x| \to \infty} |x|^{n-2} \left| u(x) - \left( \frac12 x^T A x + b \cdot x + c \right) \right| < \infty.
		\]
		
	\end{theorem}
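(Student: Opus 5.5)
The plan is Perron's method between an ordered sub/supersolution pair with the prescribed quadratic asymptotics. Smoothness then comes from a.priori estimates, and the sharp rate $|x|^{-(n-2)}$ from the exterior Liouville theorem of Li--Li--Yuan. First reduce to a normal form. After an orthogonal change of variables, $A=\mathrm{diag}(a_1,\dots,a_n)$ with $a_i>0$. Since $\eta$ commutes with orthogonal conjugation, the equation and the class $\mathcal A_{k,l}$ are invariant. Subtracting the affine part $b\cdot x$ does not change $D^2u$, so we may take $b=0$ and absorb $-b\cdot x$ into $\varphi$. Write $F(M)=\sigma_k(\eta(M))/\sigma_l(\eta(M))$. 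On the cone $\{\lambda(\eta(M))\in\Gamma_k\}$ the operator $F$ is elliptic and $F^{1/(k-l)}$ is concave. Positivity of $A$ gives $\eta(A)>0$, so $A$ lies in this cone.

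\textbf{Three-stage subsolution.} We look for a generalized symmetric subsolution $\underline u(x)=\int_{\bar r}^{r(x)}\ldots$ with $r^2=x^TAx$. Concretely, take $\underline u=\tfrac12x^TAx+\psi(r)$ and solve the ODE making $F(D^2\underline u)\ge 1$.

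In the radial variable, $D^2\underline u$ has eigenvalues $a_i(1+\psi'/r)$ plus a rank-one correction of size $r(\psi'/r)'\,a_i^2x_i^2/r^2$. The $\eta$-eigenvalues are therefore explicit linear combinations. Expanding $\sigma_k,\sigma_l$ in the perturbation $g=\psi'/r$ gives an inequality of the form $\Phi(g)+r g'\,\Xi(g)\ge 0$. This can be solved with $g\sim -C r^{-n}$, which yields $\psi\sim c - C r^{2-n}$.

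The three stages are as follows.
\begin{enumerate}[(i)]
\item Near $\partial D$, use $\max_{\xi\in\partial D}$ of local strictly admissible quadratic subsolutions $w_\xi$ touching $\varphi$ at $\xi$, built from a large multiple of $|x-\bar x|^2$ (valid without convexity of $D$ because $\eta$ of a large positive multiple of $I$ is in the cone).
\item At intermediate range, use the radial ODE subsolution with a free parameter.
\item Glue the two by taking a maximum, using the free parameter $c$ to arrange that $\underline u=\max$ equals stage (i) near $\partial D$ and equals the radial one for $r$ large.
\end{enumerate}
The threshold $c_*$ arises exactly from requiring this gluing to be consistent.

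The case $l=n-1$ is the main obstacle: there $\sigma_{n-1}(\eta)$ degenerates more delicately, and one needs the block-matrix expansion of $\sigma_{n-1}$ to check that the ODE inequality still holds with the sign needed. I would first try to verify that $\Xi(g)>0$ uniformly for small $|g|$ using the identity $\sigma_{n-1}(\eta(M))$ in terms of $\operatorname{tr}M$ and $\sigma_j(M)$.

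\textbf{Supersolution and Perron.} A supersolution is $\bar u=\tfrac12x^TAx+c$ when $c\ge\sup$ of the needed boundary values, or a harmonic-type correction of it; we need $\underline u\le\bar u$ with both having the same asymptotics $\tfrac12x^TAx+c+O(r^{2-n})$. Perron's method then gives a viscosity solution $u$ between them with $u=\varphi$ on $\partial D$. Uniqueness follows from the comparison principle on $B_R\setminus D$ combined with the common asymptotics as $R\to\infty$. Barrier arguments at $\partial D$ use stage (i).

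\textbf{Smoothness and decay.} To upgrade to smooth strictly $\widetilde\Gamma_k$-admissible solutions, I would solve Dirichlet problems on $B_R\setminus\overline D$ with boundary data $\underline u$ on $\partial B_R$. The estimates come from the bounded-domain theory: $C^2$ boundary estimates via the subsolution, where the lack of a $k\ge l+2$ restriction is handled because $\underline u$ itself supplies the boundary barrier. Interior Hessian estimates come from Chen--Dong--Han, and concavity together with Evans--Krylov gives $C^\infty$. Letting $R\to\infty$ and using uniqueness recovers $u$.

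Finally, $v=u-\tfrac12x^TAx-c$ satisfies $|v|=O(|x|^{2-n})$ from the barriers. Rescaling $u$ on annuli and applying the Li--Li--Yuan exterior Liouville theorem to the uniformly elliptic concave equation gives $|D^jv|=O(|x|^{2-n-j})$, which is the full derivative decay.
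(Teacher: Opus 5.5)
Two steps of your construction fail at the generality of this statement.

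\emph{Near-boundary barrier.} Each $w_\xi$ (an affine function plus $K|x-\bar x_\xi|^2$) is strictly convex. Take $\varphi\equiv0$: then $w_\xi\le 0$ on $\partial D$ forces $w_\xi\le0$ on the convex hull of $\partial D$. Hence $w_\xi(\xi)=0$ is impossible at any $\xi\in\partial D$ interior to that hull, and such points exist as soon as $D$ is not convex. That $\eta(KI)$ is admissible has nothing to do with this obstruction; it is the reason Caffarelli--Li and Bao--Li--Li assume strict convexity.

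The theorem assumes nothing about $D$. What replaces convexity is the hypothesis $k<n$, which your argument never uses. The paper takes $w=\varphi(\pi(x))+e^{\alpha d(x)}-1$ in a collar. Then $\alpha^{-2}e^{-\alpha d}\eta(D^2w)\to I-Dd\otimes Dd$, whose eigenvalues are $(1,\dots,1,0)$, and $\sigma_k$ of these equals $\binom{n-1}{k}>0$ precisely because $k\le n-1$. The paper then glues $w$ to $Ts+\gamma$ by regularized maxima, and the same $u_{nn}I_{n-1}$ block yields the double-normal bound on $\partial D$. Relatedly, $l=n-1$ cannot occur here since $l<k<n$; the obstacle you single out belongs to the $k=n$ theorem.

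\emph{Far field.} Use the concave normalization $F=(\sigma_k(\eta)/\sigma_l(\eta))^{1/(k-l)}$ and put $\mu_i:=F^{ii}(A)a_i$, so $\sum_i\mu_i=1$. Concavity gives $1\le F(D^2\underline u)\le 1+L_A(\underline u-Q_c)$. For your ansatz, on the $e_i$-axis one has $L_A\psi(r)=g+\mu_i rg'$, so you need $(r^{1/\mu_i}g)'\ge0$ for every $i$.

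If $g>0$ at some radius, then $g\gtrsim r^{-1/\max_i\mu_i}$ beyond it. So the best possible rate is $r^{2-1/\max_i\mu_i}$, which is $r^{2-n}$ only when all $\mu_i=1/n$; and for $\max_i\mu_i\ge\frac12$ this makes $\psi$ unbounded. The alternative $g\le0$ gives $\psi\ge c$, i.e.\ $\underline u\ge Q_c$ far out. But $\underline u-Q_c$ is $L_A$-subharmonic, negative on $\partial D$ and tends to $0$, so the strong maximum principle forces $\underline u<Q_c$. Hence no choice of $\psi$ and no gluing can work when $\max_i\mu_i\ge\frac12$.

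This case does occur. Take $A=\mathrm{diag}(L,\epsilon,\dots,\epsilon)\in\mathcal A_{k,l}$, with $L=L(\epsilon)$ fixed by the constraint. Then $\eta(A)\to(0,L,\dots,L)$, and Euler's identity for $\sigma_k/\sigma_l$ in the variables $(\eta_2,\dots,\eta_n)$ gives $\mu_1\to1$ as $\epsilon\to0$. This is exactly the restriction on $A$ in Bao--Li--Li and Li--Li.

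The paper avoids it by adapting the barrier to $L_A$ rather than to $A$:
\begin{itemize}
\item[(a)] $v$ is the $L_A$-capacitary potential of $E_1$, so $v\sim|x|^{2-n}$ for every $A$;
\item[(b)] $W=v^\theta$ with $\theta$ small satisfies $|D^2W|^2\le C\theta(-L_AW)$, which absorbs the quadratic error in $F(A-mD^2W_\tau)$;
\item[(c)] the far-field subsolution is $u_{\mathrm{far}}=Q_c-mW_\tau$.
\end{itemize}
This gives only $|x|^{-\beta}$ decay, which is enough: the sharp $|x|^{2-n}$ rate and the derivative decay come afterwards from Li--Li--Yuan.
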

	For every affine function $\ell(x)=b\cdot x+d$, one has $\eta(D^2(u-\ell))=\eta(D^2 u)$. Hence, after replacing $u$ by $u-b\cdot x$ and $\varphi$ by $\varphi-b\cdot x$, one may reduce to the case $b=0$. All constructions  are carried out under this normalization.
	
	\begin{theorem}
		Let $D \subset \mathbb{R}^n$ be a bounded strictly
		$\eta$-$1$-convex $C^\infty$ domain with boundary data $\varphi \in C^\infty(\partial D)$. Then for any given $A \in \mathcal{A}_{n,l}$ with  $0\leq l<k=n$ and any $b\in \mathbb{R}^n$, there exists $c_* = c_*(n,l,b,A,\partial D,\|\varphi\|_{C^2(\partial D)})$ such that for every $c > c_*$, the exterior Dirichlet problem  \eqref{eq:main} admits a unique strictly $\widetilde{\Gamma}_n$-admissible solution
		\[
		u \in C^\infty(\mathbb{R}^n \setminus \overline{D}) \cap C^0(\mathbb{R}^n \setminus D),
		\]
		satisfying
		\[
		\lim_{|x| \to \infty} |x|^{n-2} \left| u(x) - \left( \frac12 x^T A x + b \cdot x + c \right) \right| < \infty.
		\]
	\end{theorem}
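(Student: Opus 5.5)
We follow the Perron-method scheme of Caffarelli--Li and Bao--Li--Li, adapted to the operator $F(D^2u)=\sigma_n(\eta(D^2u))/\sigma_l(\eta(D^2u))$. After the normalization $b=0$, the work splits into three parts: barriers near $\partial D$, a generalized radial subsolution, and the asymptotic analysis.

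\textbf{Step 1: barriers near the boundary.} Because $\partial D$ is strictly $\eta$-$1$-convex, for each $\xi\in\partial D$ we want a local quadratic $w_\xi$ with $w_\xi(\xi)=\varphi(\xi)$, $w_\xi\le\varphi$ on $\partial D$, and $\eta(D^2w_\xi)\in\widetilde\Gamma_n$ with $F(D^2w_\xi)\ge1$. We take
\[
w_\xi(x)=\varphi(\xi)+p_\xi\cdot(x-\xi)+\tfrac{K}{2}|x-\xi|^2-K'\,\rho(x),
\]
where $\rho$ is a defining function whose Hessian, through $\eta$, is positive in the tangential directions; this is exactly where $\eta$-$1$-convexity enters. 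Taking $K$ large makes all $\eta_i$ positive and large. Since $\sigma_n/\sigma_l$ is homogeneous of degree $n-l>0$, this forces $F\ge1$. Setting $\underline w=\sup_\xi w_\xi$ gives a subsolution attaining $\varphi$ on $\partial D$.

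\textbf{Step 2: generalized radial subsolution (the main obstacle).} Diagonalize $A=\mathrm{diag}(a_i)$ and look for $\Phi(x)=\int_1^{r}\theta(s)\,ds$ with $r^2=x^TAx$. Then $D^2\Phi$ has the form $\frac{\theta}{r}A+(\text{rank one})$, so $\eta(D^2\Phi)$ is $\frac\theta r\eta(A)$ plus a rank-one perturbation. Expanding $\sigma_n$ and $\sigma_l$ of this matrix reduces $F(D^2\Phi)\ge1$ to an ODE inequality in $h=\theta/r$, of the form
\[
h^{n-l}\,\sigma_n(\eta(A))+r\,h'\,(\cdots)\ \ge\ h^{0}\sigma_l(\eta(A))+r\,h'\,(\cdots).
\]
Solving with $h\to1$ at infinity should give $h=1+O(r^{-n})$ and hence $\Phi=\tfrac12x^TAx+c+O(|x|^{2-n})$. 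The delicate point is the sign of the coefficient multiplying $rh'$. For $k=n$ we expect positivity of $A$ together with $\eta(A)\in\widetilde\Gamma_n$ to give the required monotonicity via $\sigma_{n-1}(\eta(A)|i)>0$, but the mixed $\eta$-structure makes this a genuine computation. The gluing is then done as follows: we take $\underline u=\max\{\underline w,\ \Phi_{c}\}$ with a shift parameter, and for $c>c_*$ arrange that $\Phi_c<\underline w$ near $\partial D$ and $\Phi_c>\underline w$ far away. This is where $c_*$ depends on $\|\varphi\|_{C^2}$ and $\partial D$.

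\textbf{Step 3: Perron method, regularity, uniqueness, asymptotics.} The supersolution is $\overline u=\tfrac12x^TAx+c$ (with $\varphi\le\overline u$ on $\partial D$ for $c$ large). Perron's method between $\underline u$ and $\overline u$ gives a viscosity solution with $\underline u\le u\le\overline u$, hence $u=\tfrac12x^TAx+c+O(|x|^{2-n})$. Uniqueness follows from the comparison principle applied on large balls, using the common asymptotics. For smoothness we solve the Dirichlet problem on $B_R\setminus\overline D$ with $\underline u$ as subsolution, using the boundary estimates enabled by $\eta$-$1$-convexity together with interior $C^2$ estimates in the spirit of Chen--Dong--Han. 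Uniform local bounds then let $R\to\infty$. Finally, the full derivative decay follows from the exterior Liouville theorem of Li--Li--Yuan applied to $u$, which upgrades the $O(|x|^{2-n})$ bound to $|D^m(u-\tfrac12x^TAx-c)|=O(|x|^{2-n-m})$.
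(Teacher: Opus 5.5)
Your Step 1 fails for exactly the domains this theorem covers. For $n\ge3$, strict $\eta$-$1$-convexity only says $\sigma_1(\eta(\kappa))=(n-2)H>0$. So $\partial D$ may have points where some principal curvature satisfies $\kappa_e\le 0$.

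Since $\rho=0$ on $\partial D$, your $w_\xi$ restricted to $\partial D$ is just $\varphi(\xi)+p_\xi\cdot(x-\xi)+\frac{K}{2}|x-\xi|^2$. The $K'\rho$ term therefore cannot help with $w_\xi\le\varphi$ on $\partial D$. Differentiate twice along a boundary geodesic $\gamma$ with $\gamma(0)=\xi$, $\gamma'(0)=e$, $\gamma''(0)=-\kappa_e\nu$. This gives the local necessary condition $K\le\kappa_e\,(p_\xi\cdot\nu)+C(\|\varphi\|_{C^2(\partial D)})$ for every unit tangent $e$. If $\kappa_e=0$, this caps $K$. If $\kappa_e<0$, it forces $p_\xi\cdot\nu\ll 0$, which then violates the condition in a direction $e'$ with $\kappa_{e'}>0$; such a direction exists because $H>0$.

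So a supremum of such barriers with $K$ large exists only when $D$ is strictly convex. That is precisely the Caffarelli--Li and Bao--Li--Li hypothesis this theorem is meant to drop.

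You also place $\eta$-$1$-convexity in the wrong direction. The tangential entries $H_d-\lambda_\alpha$ of $\eta(D^2d)$ are not controlled by $H>0$. What $H>0$ controls is the normal entry $\eta(D^2d)(\nu,\nu)=\operatorname{tr}D^2d=H_d$.

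The paper instead uses the collar function $w=\varphi(\pi(x))+Kd+\frac{B}{2}d^2$. It equals $\varphi$ identically on $\partial D$, so no supremum is needed, and $\eta(D^2w)=\eta(D^2(\varphi\circ\pi))+(K+Bd)\,\eta(D^2d)+B(I-Dd\otimes Dd)$. Mean convexity makes the normal eigenvalue $\approx(K+Bd)H_d$ positive. The $d^2$ term vanishes with its gradient on $\partial D$, yet raises the tangential eigenvalues to $\approx B$. This gives $\sigma_n/\sigma_l\gtrsim B^{n-1-l}K$ for $l\le n-2$; for $l=n-1$ one takes $B=mK$ and lets $K\to\infty$.

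Step 2 is left open, and the structure you assume is not right. Since $\eta(v\otimes v)=|v|^2I-v\otimes v$ has rank $n-1$, with $h=\theta/r$ one gets $\eta(D^2\Phi)=h\,\eta(A)+(\theta'-h)\frac{|Ax|^2}{r^2}I-(\theta'-h)\frac{Ax\otimes Ax}{r^2}$. This is a diagonal matrix depending on $x/|x|$ plus a rank-one term. So $F(D^2\Phi)\ge1$ does not reduce to an ODE in $h$ alone, and the rate $h=1+O(r^{-n})$ is unsupported; such constructions typically give $A$-dependent exponents, as in Li--Li.

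You do not need a rate at all. If $\underline u\le u\le Q_c$ with $\underline u-Q_c\to0$, Li--Li--Yuan yields some $Q$ with $|D^m(u-Q)|=O(|x|^{2-n-m})$ and forces $Q=Q_c$. That is why the paper only builds a crude far-field piece $Q_c-mW_\tau$, with $W=v^\theta$ and $v$ the $L_A$-capacitary potential of $E_1$, decaying like $|x|^{-\beta}$.

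What Li--Li--Yuan does require is a global bound $\|D^2u\|_{L^\infty(\mathbb{R}^n\setminus\overline{D})}\le C$, and your Step 3 never produces one. Local bounds that merely let $R\to\infty$ are not enough, and for $k=n$ you have not identified a scale-invariant interior $C^2$ estimate that would supply it.

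The paper gets the global bound in four pieces:
\begin{enumerate}
\item $F^{ij}(D^2u_R)\partial_{ij}(\Delta u_R)\ge0$, by concavity of $F$.
\item Boundary Hessian bounds on $\partial D$. Here $H>0$ and $\partial_\nu u_R\ge K$ give $\sum_{\alpha<n}u_{\alpha\alpha}\ge c_3>0$, with a separate $\sigma_{n-1}$ expansion for the double normal derivative when $l=n-1$.
\item Boundary Hessian bounds on $\partial E_R$, by rescaling to a fixed ellipsoid.
\item $|\eta(D^2u_R)|\le\sigma_1(\eta(D^2u_R))$, since $\eta(D^2u_R)\in\Gamma_2$.
\end{enumerate}
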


	The paper is organized as follows. In section 2 we recall necessary preliminaries. In  section 3, we construct a global subsolution for \eqref{eq:main} through a three-stage argument. The overall strategy is inspired by Li and Xiao \cite{LYX26}. We first construct a near-boundary subsolution in \(E_1 \setminus \overline{D}\), then a far-field subsolution in \(\mathbb{R}^n \setminus E_\tau\), and finally an intermediate layer connecting them. The gluing of the three pieces is achieved via the regularized maximum technique of Caffarelli and Li \cite{CL03}.  In Section 4 we solve approximating Dirichlet problems and derive uniform a priori estimates. Section 5 contains the proof of the main theorem, including the asymptotic expansion and uniqueness.
	
	\section{Preliminaries}
	In this preparatory section, we recall standard notation, definitions, and preliminary results.
	
	\subsection{Notation and definitions}
	The following notation and definitions will be used throughout this paper.
	Let $$s(x) := \frac12 x^T A x,$$and for $l > 0$, define $$E_l := \{ s(x) < l \}.$$ 
	
	Choose $\tau_1 > 1$ such that $\overline{D} \subset E_{\tau_1}$, and set $E_1 := E_{\tau_1}.$
	
	Let $$Q_c(x) := s(x) + c.$$
	
	Let $d(x)$ be the signed distance to $\partial D$, positive outside $D$, and let $\pi(x)$ be the nearest point projection onto $\partial D$, defined in a tubular neighborhood $U_{\delta_*} = \{ 0 < d(x) < \delta_* \}$.
	
	For any $\lambda = (\lambda_1, \dots, \lambda_n) \in \mathbb{R}^n$, the $k$-th elementary symmetric function is defined by
	\[
	\sigma_k(\lambda) = \sum_{1 \le i_1 < \dots < i_k \le n} \lambda_{i_1} \cdots \lambda_{i_k}, \qquad 1 \le k \le n.
	\]
	We set $\sigma_0(\lambda) = 1$ and $\sigma_k(\lambda) = 0$ for $k > n$. 
	For convenience, we denote by $\sigma_k(\lambda \mid i)$ the symmetric function with $\lambda_i = 0$, and by $\sigma_k(\lambda \mid ij)$ the symmetric function with $\lambda_i = \lambda_j = 0$.

	\begin{definition}
		For $1 \le k \le n$, the G{\aa}rding cone $\Gamma_k$ is defined by
		\begin{equation}
			\Gamma_k = \{ \lambda \in \mathbb{R}^n : \sigma_j(\lambda) > 0 \text{ for all } 1 \le j \le k \}.
		\end{equation}
		It is well known that $\Gamma_k$ is an open convex cone with vertex at the origin, and
		\[
		\Gamma_1 \supset \Gamma_2 \supset \cdots \supset \Gamma_n = \Gamma^+,
		\]
		where $\Gamma^+ = \{ \lambda \in \mathbb{R}^n : \lambda_i > 0 \text{ for all } i \}$.
		The admissible cone for the mixed Hessian operator is defined by
		\begin{equation}
			\widetilde{\Gamma}_k = \{ \lambda \in \mathbb{R}^n : \sigma_i(\eta(\lambda)) > 0 \text{ for all } 1 \le i \le k \}.
		\end{equation}
	\end{definition}

	\begin{definition}
		Let $U \subset \mathbb{R}^n$ be open and $2 \leq k \leq n$. A function $u \in C^2(U)$ is called strictly $\widetilde{\Gamma}_k$-admissible if
		\[
		\lambda(D^2 u(x)) \in \widetilde{\Gamma}_k \quad \text{for every } x \in U.
		\]
		It is called $\widetilde{\Gamma}_k$-admissible if
		\[
		\lambda(D^2 u(x)) \in \overline{\widetilde{\Gamma}}_k \quad \text{for every } x \in U.
		\]
		Equivalently,
		\[
		\sigma_i(\eta(D^2 u)(x)) > 0 \quad (\text{respectively } \geq 0) \quad \text{for all } 1 \leq i \leq k \text{ and } x \in U.
		\]
	\end{definition}

	\begin{definition}
		Let $\kappa = (\kappa_1, \ldots, \kappa_{n-1})$ be the principal curvature vector of $\partial D$ with respect to the outward unit normal, and let
		\[
		H := \sigma_1(\kappa) = \sum_{\alpha=1}^{n-1} \kappa_\alpha, \qquad
		\eta(\kappa) := \left( \sum_{\beta \neq 1} \kappa_\beta, \ldots, \sum_{\beta \neq n-1} \kappa_\beta \right) = (H - \kappa_1, \ldots, H - \kappa_{n-1}).
		\]
		For the boundary curvature vector $\kappa \in \mathbb{R}^{n-1}$, we use the same notation $\Gamma_m$ and $\widetilde{\Gamma}_m$ for the corresponding cones defined in dimension $n-1$.
		For $1 \leq m \leq n-1$, we say that $\partial D$ is strictly $\eta$-$m$-convex if
		\[
		\kappa \in \widetilde{\Gamma}_m \quad \text{or} \quad \eta(\kappa) \in \Gamma_m \quad \text{on } \partial D.
		\]
	\end{definition}
	
	we show some properties about the elementary symmetric function, 
	
	\begin{proposition}\label{prop}
		The following properties hold:
		\begin{enumerate}
			\item $\widetilde{\Gamma}_k$ are convex cones and
			\[
			\Gamma_1 = \widetilde{\Gamma}_1 \supset \widetilde{\Gamma}_2 \supset \cdots \supset \widetilde{\Gamma}_n \supset \Gamma_2.
			\]
			
			\item If $\lambda = (\lambda_1, \ldots, \lambda_n) \in \widetilde{\Gamma}_k$, then
			\[
			\frac{\partial \left[ \frac{\sigma_k(\eta)}{\sigma_l(\eta)} \right]}{\partial \lambda_i}
			= \sum_{p \neq i} \frac{\sigma_{k-1}(\eta|p)\sigma_l(\eta) - \sigma_k(\eta)\sigma_{l-1}(\eta|p)}{\sigma_l(\eta)^2},
			\]
			and
			\[
			\frac{\partial \left[ \frac{\sigma_k(\eta)}{\sigma_l(\eta)} \right]}{\partial \lambda_i}
			\geq \frac{n(k-l)}{k(n-l)} \sum_{p \neq i} \frac{\sigma_{k-1}(\eta|p)\sigma_l(\eta|p)}{\sigma_l(\eta)^2},
			\]
			for any $i = 1, 2, \ldots, n$, where $0 \leq l < k \leq n$.
			
			\item If $\lambda = (\lambda_1, \ldots, \lambda_n) \in \widetilde{\Gamma}_k$, then $\left[ \frac{\sigma_k(\eta)}{\sigma_l(\eta)} \right]^{\frac{1}{k-l}}$ $(0 \leq l < k \leq n)$ is concave with respect to $\lambda$. Hence, for any $(\xi_1, \ldots, \xi_n)$, we have
			\[
			\sum_{i,j} \frac{\partial^2 \left[ \frac{\sigma_k(\eta)}{\sigma_l(\eta)} \right]}{\partial \lambda_i \partial \lambda_j} \xi_i \xi_j
			\leq
			\left(1 - \frac{1}{k-l}\right)
			\frac{
				\left(
				\sum_i \frac{\partial \left[ \frac{\sigma_k(\eta)}{\sigma_l(\eta)} \right]}{\partial \lambda_i} \xi_i
				\right)^2
			}{
				\frac{\sigma_k(\eta)}{\sigma_l(\eta)}
			}.
			\]
		\end{enumerate}
	\end{proposition}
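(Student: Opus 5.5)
The plan is to transfer the classical facts about the G{\aa}rding cones $\Gamma_k$ and the quotient $\sigma_k/\sigma_l$ through the linear map $\lambda\mapsto\eta(\lambda)$, $\eta_i=\sigma_1(\lambda)-\lambda_i$. This map is invertible for $n\ge 2$: since $\sigma_1(\eta)=(n-1)\sigma_1(\lambda)$, we get $\lambda_i=\sigma_1(\eta)/(n-1)-\eta_i$. Hence $\widetilde{\Gamma}_k=\eta^{-1}(\Gamma_k)$ is the preimage of an open convex cone under a linear isomorphism, and so it is an open convex cone.

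\textbf{Item (1).} The inclusions $\widetilde{\Gamma}_k\supset\widetilde{\Gamma}_{k+1}$ follow immediately from $\Gamma_k\supset\Gamma_{k+1}$. The identity $\Gamma_1=\widetilde{\Gamma}_1$ holds because $\sigma_1(\eta)=(n-1)\sigma_1(\lambda)$. For $\Gamma_2\subset\widetilde{\Gamma}_n$ we must show that $\lambda\in\Gamma_2$ implies $\eta_i>0$ for every $i$. For $n\ge3$, the vector $(\lambda\mid i)\in\mathbb{R}^{n-1}$ lies in $\Gamma_1$, by the standard fact that $\lambda\in\Gamma_k$ implies $(\lambda\mid i)\in\Gamma_{k-1}$. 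Therefore $\eta_i=\sigma_1(\lambda\mid i)>0$, which gives $\eta(\lambda)\in\Gamma^+=\Gamma_n$.

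\textbf{Item (2).} I will use the chain rule $\partial\eta_p/\partial\lambda_i=1-\delta_{pi}$, so that $\partial/\partial\lambda_i=\sum_{p\ne i}\partial/\partial\eta_p$. Combined with $\partial\sigma_m(\eta)/\partial\eta_p=\sigma_{m-1}(\eta\mid p)$, this gives the displayed formula directly. For the inequality, I will apply the known estimate for $\eta\in\Gamma_k$ to each term $p$ separately:
\[
\sigma_{k-1}(\eta|p)\sigma_l(\eta)-\sigma_k(\eta)\sigma_{l-1}(\eta|p)\ \ge\ \tfrac{n(k-l)}{k(n-l)}\,\sigma_{k-1}(\eta|p)\sigma_l(\eta|p).
\]
This is the Li--Li / Lin--Trudinger type inequality. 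To prove it, I write $\sigma_m(\eta)=\sigma_m(\eta|p)+\eta_p\sigma_{m-1}(\eta|p)$, which reduces the left side to
\[
\sigma_{k-1}(\eta|p)\sigma_l(\eta|p)-\sigma_k(\eta|p)\sigma_{l-1}(\eta|p).
\]
I then invoke the Newton--Maclaurin inequality in the form
\[
\sigma_k(\eta|p)\sigma_{l-1}(\eta|p)\le \Bigl(1-\tfrac{n(k-l)}{k(n-l)}\Bigr)\sigma_{k-1}(\eta|p)\sigma_l(\eta|p),
\]
valid for $\eta\in\Gamma_k$; when $\sigma_k(\eta|p)\le 0$ the bound holds trivially. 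Summing over $p\ne i$ then gives the claim. The one delicate point here is the constant, which comes out of the Newton--Maclaurin ratio inequality in $n-1$ variables. I expect this to be the main obstacle: one must check that the constant obtained from $\binom{n-1}{\cdot}$ is at least the stated $n(k-l)/(k(n-l))$. If it falls short, I would fall back to citing the known lemma directly.

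\textbf{Item (3).} It is classical that $(\sigma_k/\sigma_l)^{1/(k-l)}$ is concave on $\Gamma_k$. Composition with a linear map preserves concavity, so $g=(\sigma_k(\eta)/\sigma_l(\eta))^{1/(k-l)}$ is concave on $\widetilde{\Gamma}_k$. Writing $f=g^{k-l}$, we have
\[
D^2f=(k-l)g^{k-l-1}D^2g+(k-l)(k-l-1)g^{k-l-2}\,Dg\otimes Dg.
\]
Since $D^2g\le0$, it follows that $D^2f\le (k-l)(k-l-1)g^{k-l-2}\,Dg\otimes Dg$. Finally, substituting $Dg=\frac{1}{k-l}g^{1-(k-l)}Df$ turns the right-hand side into $\bigl(1-\frac1{k-l}\bigr)(Df\cdot\xi)^2/f$, which is the stated inequality.
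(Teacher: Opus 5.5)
Your proof is correct. It differs from the paper only in that the paper gives no argument: it says all three items are well known and refers to Proposition 2.6 of \cite{CDH22}.

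Your route shows why that citation is legitimate. Because $\lambda\mapsto\eta(\lambda)$ is linear, items (1) and (3) are pullbacks of the standard facts about $\Gamma_k$ and of the classical concavity of $(\sigma_k/\sigma_l)^{1/(k-l)}$ on $\Gamma_k$. The only $\eta$-specific input is $\eta_i=\sigma_1(\lambda|i)$, used for the inclusion $\Gamma_2\subset\widetilde{\Gamma}_n$. In item (2) the $\eta_p$-terms cancel, and everything reduces to a Newton--Maclaurin inequality for $\mu=(\eta|p)\in\Gamma_{k-1}\subset\mathbb{R}^{n-1}$.

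The point you flagged as the main obstacle closes exactly, so no fallback is needed. When $\sigma_k(\mu)>0$ (so $\mu\in\Gamma_k$ in $m=n-1$ variables), the ratios $\bigl(\sigma_j/\binom{m}{j}\bigr)/\bigl(\sigma_{j-1}/\binom{m}{j-1}\bigr)$ decrease in $j\le k$. This gives
\[
\sigma_k(\mu)\,\sigma_{l-1}(\mu)\le \frac{(n-k)\,l}{k(n-l)}\,\sigma_{k-1}(\mu)\,\sigma_l(\mu),
\qquad
1-\frac{(n-k)\,l}{k(n-l)}=\frac{n(k-l)}{k(n-l)},
\]
so the stated constant is precisely the Newton--Maclaurin constant in $n-1$ variables. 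When $\sigma_k(\mu)\le 0$, the bound is trivial, as you noted.

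The boundary cases are also harmless:
\begin{itemize}
\item For $l=0$, both sides of the displayed inequality vanish, since $\sigma_{-1}=0$, and the stated constant becomes $1$.
\item For $k=n$, $\sigma_n(\mu)=0$ in $n-1$ variables and the complementary constant $\frac{(n-k)l}{k(n-l)}$ is $0$.
\end{itemize}

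Your version gives a self-contained proof that makes clear nothing beyond the linearity of $\eta$ (plus $\eta_i=\sigma_1(\lambda|i)$) is used. The paper's citation buys only brevity.
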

	\begin{proof}
		All the properties are well known. For example, see Proposition 2.6 in \cite{CDH22}.
	\end{proof}
	
	We write
	
	\[
	F(M) := 
	\begin{cases} 
		\left[ \dfrac{\sigma_k(\eta(M))}{\sigma_l(\eta(M))} \right]^{1/(k-l)}, & 2 \le k \le n-1, \\[1.2ex]
		\left[ \dfrac{\det(\eta(M))}{\sigma_l(\eta(M))} \right]^{1/(n-l)}, & k = n.
	\end{cases}
	\]
	With this notation, the equation can be written uniformly as
	\begin{equation}
		F(D^2 u) = 1.
	\end{equation}
	
	\begin{lemma}\label{lem:eta_composition}
		Let $U \subset \mathbb{R}^n$ be open, let $w \in C^2(U)$, and let $u = \Phi(w)$ with $\Phi \in C^2(\mathbb{R})$. Then
		\[
		\eta(D^2u) = \Phi'(w)\eta(D^2w) + \Phi''(w)\bigl(|Dw|^2 I - Dw \otimes Dw\bigr).
		\]
		If $|Dw| = 1$ in $U$, then
		\[
		\bigl(|Dw|^2 I - Dw \otimes Dw\bigr)Dw = 0, \qquad
		\bigl(|Dw|^2 I - Dw \otimes Dw\bigr)\xi = \xi \quad \text{for every } \xi \perp Dw.
		\]
		In particular, the matrix $|Dw|^2 I - Dw \otimes Dw$ has eigenvalues $1, \ldots, 1, 0$.
	\end{lemma}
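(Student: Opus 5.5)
The plan is a direct computation: apply the chain rule to $D^2u$ and then use the linearity of $\eta$. For $u=\Phi(w)$ we have
\[
D_iu=\Phi'(w)D_iw,\qquad D_{ij}u=\Phi'(w)D_{ij}w+\Phi''(w)D_iw\,D_jw,
\]
that is, $D^2u=\Phi'(w)D^2w+\Phi''(w)\,Dw\otimes Dw$. The map $\eta(M)=(\operatorname{tr}M)I-M$ is linear on symmetric matrices. Hence
\[
\eta(D^2u)=\Phi'(w)\eta(D^2w)+\Phi''(w)\eta(Dw\otimes Dw).
\]
Since $\operatorname{tr}(Dw\otimes Dw)=|Dw|^2$, we get $\eta(Dw\otimes Dw)=|Dw|^2I-Dw\otimes Dw$, which is the claimed identity.

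For the second part, assume $|Dw|=1$ and set $P:=|Dw|^2I-Dw\otimes Dw=I-Dw\otimes Dw$. Using $(Dw\otimes Dw)\xi=(Dw\cdot\xi)Dw$:
\begin{itemize}
\item Taking $\xi=Dw$ gives $PDw=Dw-|Dw|^2Dw=0$.
\item For $\xi\perp Dw$, the term $(Dw\cdot\xi)Dw$ vanishes, so $P\xi=\xi$.
\end{itemize}
Thus $P$ is the orthogonal projection onto $Dw^\perp$. Decompose $\mathbb{R}^n=\operatorname{span}\{Dw\}\oplus Dw^\perp$ and choose an orthonormal basis adapted to this splitting. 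In that basis $P$ is diagonal with entries $0$ (once) and $1$ (with multiplicity $n-1$), so its eigenvalues are $1,\ldots,1,0$.

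No step is a real obstacle. The only points needing care are the second-order chain rule term $\Phi''(w)\,Dw\otimes Dw$ and the fact that $\eta$ commutes with scalar multiplication and sums, which holds because $\eta$ is linear.
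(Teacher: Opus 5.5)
Your proof is correct and follows the paper's approach: the paper computes $u_{ij}=\Phi''(w)w_iw_j+\Phi'(w)w_{ij}$ by the chain rule and then forms $(\Delta u)\delta_{ij}-u_{ij}$ entrywise, which amounts to applying the linear map $\eta$ to $D^2u=\Phi'(w)D^2w+\Phi''(w)\,Dw\otimes Dw$ as you do. You also verify the projection and eigenvalue claims for $|Dw|=1$, which the paper states without proof.
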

	
	\begin{proof}
		For $1 \le i, j \le n$,
		\[
		u_i = \Phi'(w)w_i, \qquad u_{ij} = \Phi''(w)w_iw_j + \Phi'(w)w_{ij}.
		\]
		Summing over $i = j$ gives
		\[
		\Delta u = \Phi''(w)|Dw|^2 + \Phi'(w)\Delta w.
		\]
		Therefore,
		\[
		\begin{aligned}
			(\eta(D^2u))_{ij}
			&= (\Delta u)\delta_{ij} - u_{ij} \\
			&= \bigl(\Phi''(w)|Dw|^2 + \Phi'(w)\Delta w\bigr)\delta_{ij}
			- \Phi''(w)w_iw_j - \Phi'(w)w_{ij} \\
			&= \Phi'(w)\bigl((\Delta w)\delta_{ij} - w_{ij}\bigr)
			+ \Phi''(w)\bigl(|Dw|^2\delta_{ij} - w_iw_j\bigr).
		\end{aligned}
		\]
		This is precisely
		\[
		\eta(D^2u) = \Phi'(w)\eta(D^2w) + \Phi''(w)\bigl(|Dw|^2 I - Dw \otimes Dw\bigr).
		\]
	\end{proof}
	
	\begin{lemma}\label{lem:tubular}
		Assume $\partial D$ is $C^\infty$. Then there exists $\delta_* > 0$ such that
		\[
		U_{\delta_*} := \{x \in \mathbb{R}^n \setminus \overline{D} : 0 < d(x) < \delta_*\}, \qquad d(x) := \operatorname{dist}(x, \partial D),
		\]
		is a tubular neighborhood of $\partial D$, and the nearest point projection
		\[
		\pi : U_{\delta_*} \to \partial D
		\]
		is smooth. Moreover, for every $x \in U_{\delta_*}$ there exists an orthonormal frame $\{e_1, \ldots, e_{n-1}, \nu\}$ with $\nu = Dd(x)$ such that
		\begin{equation}
			D^2d(e_i, e_j) = \frac{\kappa_i(\pi(x))}{1 + \kappa_i(\pi(x))d(x)}\delta_{ij}, \qquad D^2d(\nu, \cdot) = 0.
		\end{equation}
		Here $\kappa_1, \ldots, \kappa_{n-1}$ are the principal curvatures of $\partial D$ with respect to its outward unit normal.
	\end{lemma}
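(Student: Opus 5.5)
The statement is Lemma~\ref{lem:tubular}, the standard tubular neighbourhood lemma for a smooth hypersurface; I would follow the classical argument (as in Gilbarg--Trudinger, Lemma 14.16), adapted to the exterior side.

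\textbf{Step 1: existence of $\delta_*$ and smoothness of $\pi$.} Since $\partial D$ is compact and $C^\infty$, it satisfies a uniform interior and exterior sphere condition with some radius $\mu>0$. Consequently $|\kappa_i|\le 1/\mu$ on $\partial D$. Set $\delta_*<\mu$. Then every $x$ with $0<d(x)<\delta_*$ has a unique nearest point $\pi(x)\in\partial D$, and
\[
x=\pi(x)+d(x)\,\nu(\pi(x)),
\]
where $\nu$ is the outward unit normal.

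Consider the map $\Psi:\partial D\times(0,\delta_*)\to\mathbb{R}^n$ given by $\Psi(y,t)=y+t\nu(y)$. To compute its Jacobian at $(y_0,t)$, use principal coordinates at $y_0$, in which the differential of the Gauss map is $d\nu(e_i)=\kappa_i e_i$. In these coordinates
\[
\det D\Psi=\prod_{i=1}^{n-1}(1+t\kappa_i),
\]
and each factor is positive because $t|\kappa_i|<1$. By the inverse function theorem, $\Psi$ is therefore a local $C^\infty$ diffeomorphism. By uniqueness of the nearest point it is also injective, so it is a global diffeomorphism onto $U_{\delta_*}$. Since $\Psi^{-1}=(\pi,d)$, both $\pi$ and $d$ are smooth on $U_{\delta_*}$.

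\textbf{Step 2: $Dd=\nu$.} Differentiating the identity $d(y+t\nu(y))=t$ in $t$ gives $Dd(x)\cdot\nu(\pi(x))=1$. Since $d$ is $1$-Lipschitz, $|Dd|\le1$, and these two facts together force $Dd(x)=\nu(\pi(x))$. Then $|Dd|^2=1$, and differentiating this gives $D^2d\,Dd=0$, that is, $D^2d(\nu,\cdot)=0$.

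\textbf{Step 3: the tangential Hessian.} Write $Dd(x)=\nu(\pi(x))$, so $D^2d=D\nu\circ D\pi$. Fix $x$, let $y=\pi(x)$, and let $e_i$ be the principal directions at $y$. Differentiating $\Psi(\pi(x),d(x))=x$ along $e_i$, and using that $e_i$ is tangent (so $\partial_{e_i}d=0$), we get
\[
D\pi(e_i)\,(1+d\kappa_i)=e_i,
\]
because the differential of $\Psi$ in the $y$-variable acts on $e_i$ as multiplication by $1+t\kappa_i$. Hence
\[
D^2d(e_i,e_j)=\Big\langle d\nu\Big(\tfrac{e_i}{1+d\kappa_i}\Big),e_j\Big\rangle=\frac{\kappa_i}{1+\kappa_i d}\,\delta_{ij}.
\]
The frame $\{e_1,\dots,e_{n-1},\nu\}$ transported from $y$ to $x$ is orthonormal, so this gives exactly the stated diagonal form.

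\textbf{Main obstacle.} The only delicate point is sign conventions. The lemma requires $d>0$ outside $D$ and principal curvatures taken with respect to the outward normal, so that convex domains have $\kappa_i>0$. This must be checked for consistency with $d\nu=\kappa_i$ on principal directions, which gives the $+$ sign in $1+\kappa_i d$. Apart from this, only the choice $\delta_*<\mu$ matters; outside $D$ this also guarantees $1+\kappa_i d>0$ even when some $\kappa_i<0$.
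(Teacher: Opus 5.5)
The paper states this lemma without proof, as the standard fact from Gilbarg--Trudinger (Lemmas 14.16--14.17); your argument is exactly that classical proof adapted to the exterior side with the outward normal, and it is correct. One step should be made explicit: injectivity of $\Psi$, and the identity $d(y+t\nu(y))=t$ used in Step 2, follow from the exterior ball $B(y+\mu\nu(y),\mu)\subset\mathbb{R}^n\setminus\overline{D}$ containing $B(y+t\nu(y),t)$ for $t<\mu$, which shows that $y$ is the unique nearest point of $y+t\nu(y)$, at distance $t$; uniqueness of nearest points alone does not give this.
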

	
	\begin{lemma}\label{lem:regularized_max}
		Let $V \subset \mathbb{R}^n$ be open and let $v_1, \ldots, v_N \in C^2(V)$ be strictly $\widetilde{\Gamma}_k$-admissible subsolutions of
		\[
		F(D^2 v) \ge 1 \quad \text{in } V.
		\]
		Then any finite iterated regularized maximum of $v_1, \ldots, v_N$, with sufficiently small smoothing parameter, is again a smooth strictly $\widetilde{\Gamma}_k$-admissible subsolution in $V$. Moreover, if on some set one $v_j$ dominates all the others by a fixed positive gap, then the regularized maximum agrees with $v_j$ there.
	\end{lemma}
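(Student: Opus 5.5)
We follow the regularized maximum construction of Caffarelli--Li~\cite{CL03}. Choose a smooth even function $\psi_\delta:\mathbb{R}\to\mathbb{R}$ with $\psi_\delta(t)=|t|$ for $|t|\ge\delta$, $\psi_\delta''\ge 0$, $|\psi_\delta'|\le 1$, and $|t|\le\psi_\delta(t)\le |t|+\delta$. For two functions set
\[
m_\delta(a,b):=\frac{a+b}{2}+\frac{\psi_\delta(a-b)}{2}.
\]
Then $m_\delta$ is smooth, convex and nondecreasing in each variable, and
\[
\partial_a m_\delta+\partial_b m_\delta=1, \qquad m_\delta(a,b)=\max\{a,b\} \text{ whenever } |a-b|\ge\delta .
\]
This already gives the last assertion of the lemma: once $v_j$ exceeds all the others by a gap larger than the smoothing parameter, every iterated maximum returns $v_j$ on that set. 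It therefore suffices to treat two functions $v_1,v_2$ and then iterate. The statement is local and each subsequent step is again a maximum of two admissible subsolutions, so the argument is closed under iteration.

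For $w=m_\delta(v_1,v_2)$, write $\theta=\partial_a m_\delta(v_1,v_2)\in[0,1]$. The chain rule gives
\[
D^2w=\theta D^2v_1+(1-\theta)D^2v_2+\tfrac12\psi_\delta''(v_1-v_2)\,D(v_1-v_2)\otimes D(v_1-v_2).
\]
Since $\eta$ is linear,
\[
\eta(D^2w)=\theta\,\eta(D^2v_1)+(1-\theta)\,\eta(D^2v_2)+\tfrac12\psi_\delta''\,\eta(\xi\otimes\xi), \qquad \xi=D(v_1-v_2).
\]
The matrix $\eta(\xi\otimes\xi)=|\xi|^2I-\xi\otimes\xi$ is positive semidefinite, as in Lemma~\ref{lem:eta_composition}.

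We now use three facts.

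\textbf{(i) Convexity of the cone.} Regard $G(M):=F(M)$ as a function of the symmetric matrix $N=\eta(M)$, namely $(\sigma_k(N)/\sigma_l(N))^{1/(k-l)}$. On the matrix cone $\{\lambda(N)\in\Gamma_k\}$ this function is concave. This is Proposition~\ref{prop}(3) transferred to matrices: concave symmetric functions of eigenvalues induce concave matrix functions. The admissible set $\{\lambda(N)\in\Gamma_k\}$ is convex. Hence the convex combination $N_\theta=\theta\eta(D^2v_1)+(1-\theta)\eta(D^2v_2)$ is admissible, and concavity gives
\[
G(N_\theta)\ge \theta G(\eta(D^2v_1))+(1-\theta)G(\eta(D^2v_2))\ge 1 .
\]

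\textbf{(ii) Monotonicity.} The function $G$ is nondecreasing under addition of positive semidefinite matrices inside $\Gamma_k$, because $\partial\sigma_k/\partial N$ is positive definite there. The same holds for the quotient, by Proposition~\ref{prop}(2), which is the eigenvalue form of $\partial G/\partial N_{ij}\ge0$. Moreover, $\Gamma_k+\overline{\Gamma^+}\subset\Gamma_k$. Adding the semidefinite term $\frac12\psi_\delta''\eta(\xi\otimes\xi)$ therefore keeps $\lambda(\eta(D^2w))\in\Gamma_k$, that is, $\lambda(D^2w)\in\widetilde\Gamma_k$, and $F(D^2w)\ge G(N_\theta)\ge1$.

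\textbf{(iii) Smoothness.} The map $m_\delta$ is smooth, so $w\in C^\infty$ whenever the $v_i$ are. The regularized maximum inherits only the regularity of the inputs, so the $C^2$ case is handled in the same way.

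The main obstacle is step (i) at the matrix level. The concavity in Proposition~\ref{prop}(3) is stated in $\lambda$, whereas $N_\theta$ is a convex combination of matrices that need not commute. We will invoke the standard fact (Ball; Caffarelli--Nirenberg--Spruck) that a symmetric concave function of eigenvalues is concave on symmetric matrices. We apply it to $f(\mu)=(\sigma_k(\mu)/\sigma_l(\mu))^{1/(k-l)}$ on $\Gamma_k$, noting that $\lambda(\eta(M))=\eta(\lambda(M))$ because $\eta(M)$ and $M$ commute. Strictness is preserved because $\Gamma_k$ is open and convex. Since $m_\delta$ is a maximum only up to the error $\delta$, the boundary behaviour in later applications is controlled by taking the smoothing parameter small.
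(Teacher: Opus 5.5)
Your argument is correct and is essentially the paper's: your $m_\delta(a,b)=\tfrac{a+b}{2}+\tfrac12\psi_\delta(a-b)$ is a reparametrization of the paper's $b+\delta\chi((a-b)/\delta)$ (take $\psi_\delta(t)=2\delta\chi(t/\delta)-t$). Both proofs then write $\eta(D^2w)$ as a convex combination of $\eta(D^2v_1)$ and $\eta(D^2v_2)$ plus a nonnegative multiple of $|\xi|^2I-\xi\otimes\xi$, and conclude using concavity of $F$ and monotonicity. You are more careful than the paper about concavity at the matrix level and about preserving strict cone membership. One small slip: in an iterated maximum each level can exceed the true maximum by up to $\delta/2$, so the dominating gap must exceed a multiple of $\delta$ depending on $N$, not just $\delta$; the phrase ``sufficiently small smoothing parameter'' already covers this.
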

	
	\begin{proof}
		This is the standard regularized maximum construction from Caffarelli--Li \cite{CL03}. For two functions, choose $\chi \in C^\infty(\mathbb{R})$ such that
		\[
		\chi(t) = 0 \text{ for } t \le -1, \qquad
		\chi(t) = t \text{ for } t \ge 1, \qquad
		0 \le \chi' \le 1, \qquad
		\chi'' \ge 0,
		\]
		and set
		\[
		M_\delta(a,b) := b + \delta \chi\left(\frac{a-b}{\delta}\right).
		\]
		Then 
		\[
		D^2 M_\delta(a,b)
		= \theta D^2 a + (1-\theta) D^2 b
		+ \delta^{-1} \chi''\left(\frac{a-b}{\delta}\right) (Da - Db) \otimes (Da - Db),
		\]
		where $\theta = \chi'\left(\frac{a-b}{\delta}\right) \in [0,1]$.
		Applying $\eta(M) = (\operatorname{tr}M)I - M$ gives
		\[
		\eta(D^2 M_\delta(a,b))
		= \theta \eta(D^2 a) + (1-\theta) \eta(D^2 b)
		+ \delta^{-1} \chi''\left(\frac{a-b}{\delta}\right)
		\bigl(|Da - Db|^2 I - (Da - Db) \otimes (Da - Db)\bigr).
		\]
		Furthermore, since $F$ is concave and positively homogeneous on $\widetilde{\Gamma}_k$,
		\[
		F(\theta D^2 a + (1-\theta) D^2 b) \ge \theta F(D^2 a) + (1-\theta) F(D^2 b) \ge 1.
		\]
		Since $F$ is also monotone on $\widetilde{\Gamma}_k$ and $\delta^{-1}\chi'' N \in \overline{\Gamma}_n$, we obtain
		\[
		F(D^2 M_\delta(a,b)) = F(\theta D^2 a + (1-\theta) D^2 b + \delta^{-1}\chi'' N) \ge 1.
		\]
		Thus $M_\delta(a,b)$ is a $\widetilde{\Gamma}_k$-admissible subsolution and where 
		$$	N := 
	|Da - Db|^2 I - (Da - Db) \otimes (Da - Db).$$
	\end{proof}
	
	\begin{lemma}[Comparison principle]
		Let $\Omega \subset \mathbb{R}^n$ be a bounded domain. Suppose $u, v \in C^2(\Omega) \cap C^0(\overline{\Omega})$ are strictly $\widetilde{\Gamma}_k$-admissible and satisfy
		\[
		F(D^2 u) \le F(D^2 v) \quad \text{in } \Omega,
		\]
		and
		\[
		u \ge v \quad \text{on } \partial \Omega.
		\]
		Then $u \ge v$ in $\Omega$.
	\end{lemma}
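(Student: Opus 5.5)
The plan is the standard contradiction argument: perturb $v$ to a strict subsolution and look at an interior maximum of $v-u$, where degenerate ellipticity of $F$ on the admissible cone gives the contradiction.

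\emph{Setup.} Suppose the claim fails, so $\max_{\overline\Omega}(v-u)=m>0$. Since $v-u\le 0$ on $\partial\Omega$, this maximum is attained at an interior point.

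\emph{Perturbation.} Fix $R$ with $\Omega\subset B_R(0)$ and, for small $\varepsilon>0$, set
\[
v_\varepsilon(x):=v(x)+\varepsilon\bigl(|x|^2-R^2\bigr).
\]
Then $v_\varepsilon\le v\le u$ on $\partial\Omega$, while $\max_{\overline\Omega}(v_\varepsilon-u)\ge m-\varepsilon R^2>0$ once $\varepsilon<m/R^2$. Hence $v_\varepsilon-u$ attains a positive maximum at some interior point $x_0\in\Omega$.

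\emph{Strictness of the subsolution.} We have $D^2v_\varepsilon=D^2v+2\varepsilon I$, so $\eta(D^2v_\varepsilon)=\eta(D^2v)+2\varepsilon(n-1)I$. Since $\lambda(D^2v)\in\widetilde\Gamma_k$ and $\Gamma_n+\widetilde\Gamma_k\subset\widetilde\Gamma_k$, the function $v_\varepsilon$ is still strictly admissible. Moreover $F$ is strictly increasing along positive definite directions on the admissible cone, because $\partial F/\partial\lambda_i>0$ by Proposition \ref{prop}(2). Combining this with $F(D^2u)\le F(D^2v)$ gives
\[
F(D^2v_\varepsilon(x_0))>F(D^2v(x_0))\ge F(D^2u(x_0)).
\]

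\emph{Contradiction at $x_0$.} At the interior maximum we have $D^2v_\varepsilon(x_0)\le D^2u(x_0)$. Both matrices have eigenvalues in $\widetilde\Gamma_k$, so the segment joining them lies in this convex cone (Proposition \ref{prop}(1)). Along the segment $F$ is nondecreasing in matrix order:
\[
\frac{d}{dt}F\bigl(M+tP\bigr)=\sum F^{ij}P_{ij}\ge 0,
\]
since $P=D^2u-D^2v_\varepsilon\ge 0$ and $(F^{ij})$ is positive semidefinite on the admissible set. Therefore $F(D^2v_\varepsilon(x_0))\le F(D^2u(x_0))$, which contradicts the strict inequality above.

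\emph{Main obstacle.} The key step is verifying that the linearized operator $(F^{ij})$ is positive definite on the whole segment. In $\lambda$-coordinates this is Proposition \ref{prop}(2). The passage from matrix order to eigenvalue order, and from $D^2u$ to $\eta(D^2u)$, uses that $\eta$ preserves the positive semidefinite order. Indeed, for $P\ge 0$,
\[
\eta(P)=(\operatorname{tr}P)I-P\ge 0,
\]
because $\operatorname{tr}P\ge\lambda_{\max}(P)$. Together with the standard fact that a symmetric function monotone in eigenvalues is monotone in matrix order (Weyl), this completes the argument.
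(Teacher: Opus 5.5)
Your argument is correct and is essentially the paper's. Both reduce to a strict inequality with a quadratic perturbation of size $\varepsilon|x|^2$, then reach a contradiction at an interior extremum using monotonicity of $F$ on the admissible cone. Two small differences work in your favour: you perturb $v$ upward, so $\eta(D^2v_\varepsilon)=\eta(D^2v)+2\varepsilon(n-1)I$ keeps $v_\varepsilon$ strictly admissible everywhere, and you apply monotonicity directly along the segment. The paper instead perturbs $u$ downward in a separate step and uses ``superadditivity'' with $F(D^2w)$, so it leaves open whether $D^2u-2\varepsilon I$ and $D^2w$ stay in the region where $F$ is defined.
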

	
	\begin{proof}
		The proof is divided into two steps.
		
		\paragraph{Step 1: Strict inequality.}
		We first prove the lemma under the stronger assumption
		\[
		F(D^2 u) < F(D^2 v) \quad \text{in } \Omega.
		\]
		We argue by contradiction. Define $w = u - v$ and suppose there exists $x_0 \in \Omega$ such that
		\[
		w(x_0) = \min_{\Omega} w < 0.
		\]
		At this point, we have $D^2 w(x_0) \ge 0$, i.e., $D^2 u(x_0) \ge D^2 v(x_0)$.
		Since $\eta$ is linear, applying it to both sides gives
		\[
		\eta(D^2 u(x_0)) \ge \eta(D^2 v(x_0)).
		\]
		By the superadditivity of $F$,
		\[
		F(D^2u(x_0)) = F(D^2v(x_0) + D^2w(x_0)) \ge F(D^2v(x_0)) + F(D^2w(x_0)).
		\]
		This contradicts the strict inequality $F(D^2 u) < F(D^2 v)$ at $x_0$. Therefore, $w \ge 0$ in $\Omega$, i.e., $u \ge v$ in $\Omega$.
		
		\paragraph{Step 2: General case.}
		Now assume only the non-strict inequality
		\[
		F(D^2 u) \le F(D^2 v) \quad \text{in } \Omega.
		\]
		For $\epsilon > 0$, define
		\[
		u_\epsilon(x) = u(x) + \epsilon (C - |x - x_0|^2),
		\]
		where $x_0 \in \Omega$ is fixed and $C = \max_{x \in \partial \Omega} |x - x_0|^2$ so that $u_\epsilon \ge v$ on $\partial \Omega$.
		
		Since $D^2 u_\epsilon = D^2 u - 2\epsilon I$ and $F$ is strictly monotone, we have
		\[
		F(D^2 u_\epsilon) < F(D^2 u) \le F(D^2 v).
		\]
		Thus $F(D^2 u_\epsilon) < F(D^2 v)$ in $\Omega$. Applying Step 1 to $u_\epsilon$ and $v$, we get $u_\epsilon \ge v$ in $\Omega$. Letting $\epsilon \to 0$ yields $u \ge v$ in $\Omega$.
	\end{proof}
	
	\section{Construction of a global subsolution}
	
	In this section, we construct a global subsolution for the quotient equation via a three-stage argument.

	\begin{proposition}[Near boundary barrier for the mixed hessian quotient equation]\label{near}
		There exist $T > 1$, $\gamma \in \mathbb{R}$, and 
		\[
		u_{\mathrm{near}} \in C^\infty(E_1 \setminus \overline{D}) \cap C^0(\overline{E_1} \setminus D)
		\]
		with the following properties.
		\begin{enumerate}
			\item[(i)] If $0 \le l < k \le n-1$, then for some $\alpha > 1$,
			\[
			u_{\mathrm{near}}(x) = \varphi(\pi(x)) + e^{\alpha d(x)} - 1 \quad \text{near } \partial D.
			\]
			\item[(ii)] If $0\leq l<k = n$, then for some $K, B > 0$,
			\[
			u_{\mathrm{near}}(x) = \varphi(\pi(x)) + K d(x) + \frac{B}{2} d(x)^2 \quad \text{near } \partial D.
			\]
		\end{enumerate}
		In both cases,
		\[
		u_{\mathrm{near}} = Ts + \gamma \quad \text{near } \partial E_1, \qquad
		u_{\mathrm{near}} = \varphi \quad \text{on } \partial D,
		\]
		and $u_{\mathrm{near}}$ is a strictly $\widetilde{\Gamma}_k$-admissible subsolution of the quotient equation in $E_1 \setminus \overline{D}$.
	\end{proposition}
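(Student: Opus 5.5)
We build $u_{\mathrm{near}}$ as a regularized maximum (Lemma \ref{lem:regularized_max}) of two subsolutions. The first is a barrier $\underline{w}$, defined in a thin collar $U_\delta=\{0<d<\delta\}$ with $\delta\le\delta_*$, which equals $\varphi$ on $\partial D$. The second is a quadratic $Ts+\gamma$, chosen to dominate $\underline{w}$ near $\partial U_\delta\cap\{d=\delta\}$ and everywhere away from the collar. It is then the only active piece near $\partial E_1$. Since $A>0$, we have $\eta(D^2(Ts))=T\eta(A)$ and $\eta(A)$ has positive eigenvalues. So for $T$ large, $F(D^2(Ts))=T F(A)=T\ge1$ (recall $A\in\mathcal A_{k,l}$ gives $F(A)=1$), and $Ts+\gamma$ is a strictly admissible subsolution on all of $\overline{E_1}$.

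\textbf{Step 1: the collar barrier.} Write $\underline w=\tilde\varphi+\Phi(d)$, with $\tilde\varphi=\varphi\circ\pi$ smooth in $U_{\delta_*}$ and $|D^2\tilde\varphi|\le C_0$. By Lemma \ref{lem:eta_composition} with $|Dd|=1$,
\[
\eta(D^2\underline w)=\eta(D^2\tilde\varphi)+\Phi'(d)\,\eta(D^2d)+\Phi''(d)\,(I-\nu\otimes\nu).
\]
By Lemma \ref{lem:tubular}, in the frame $\{e_1,\dots,e_{n-1},\nu\}$ the matrix $\eta(D^2d)$ is diagonal. Its tangential entries are $\sum_{\beta\neq i}\kappa_\beta/(1+\kappa_\beta d)$ and its normal entry is $\sum_\beta\kappa_\beta/(1+\kappa_\beta d)$. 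The term $\Phi''(I-\nu\otimes\nu)$ adds $\Phi''$ to the $n-1$ tangential eigenvalues and nothing to the normal one.

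\textbf{Case $k\le n-1$.} Take $\Phi(t)=e^{\alpha t}-1$, so $\Phi''=\alpha\Phi'=\alpha^2e^{\alpha d}$. Then
\[
\eta(D^2\underline w)=\alpha e^{\alpha d}\Bigl(\alpha(I-\nu\otimes\nu)+\eta(D^2d)+O(\alpha^{-1})\Bigr).
\]
For $\alpha$ large, the tangential eigenvalues are of order $\alpha$ and the normal eigenvalue is $O(1)$, with no sign condition on $\partial D$. Because $k\le n-1$, the vector $(\alpha,\dots,\alpha,O(1))$ lies in $\Gamma_k$, and $\sigma_k/\sigma_l\sim c\,\alpha^{k-l}$ is dominated by terms built only from tangential entries. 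Hence the matrix is strictly admissible and $F\ge c\,\alpha e^{\alpha d}\cdot\alpha^{0}\to\infty$, giving $F(D^2\underline w)\ge1$ for $\alpha$ large. Here we use homogeneity of degree one of $F$ and its continuity at the limiting cone point $(1,\dots,1,0)$, which lies in $\Gamma_{n-1}$. This is exactly why no convexity of $D$ is needed when $k<n$.

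\textbf{Case $k=n$.} The normal eigenvalue is now essential, since $\det$ requires every eigenvalue to be positive. Take $\Phi(t)=Kt+\frac B2t^2$. The normal eigenvalue is $K\,H/(1+\cdots)+O(1)+O(Bd)$, which is positive for $K$ large because strict $\eta$-$1$-convexity gives $H>0$ uniformly. The tangential eigenvalues are $B+K(H-\kappa_i)+O(1)$, where $H-\kappa_i$ may be negative. Choosing $B\gg K$ (for example $B=K^2$) and shrinking $\delta$ so that $Bd$ stays controlled, all eigenvalues become positive and $F\ge1$. Again this follows from monotonicity: each eigenvalue is at least $cK$.

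\textbf{Step 2: gluing.} On $\{d=\delta\}$ we have $\underline w\ge\min\varphi+\Phi(\delta)$, and this grows with $\alpha$, respectively $K$. Choose $\gamma$ so that $Ts+\gamma<\underline w-1$ near $\partial D$. This is possible: $\underline w=\varphi$ on $\partial D$ and $Ts$ is bounded there, so it suffices to take $\gamma$ very negative, but that must be compatible with the second requirement $Ts+\gamma>\underline w+1$ on $\{d=\delta\}$. To satisfy both, we use the large jump of $\Phi$ across the collar, and the $T$-dependence, with $T\ge$ the bound needed on $\{d\ge\delta\}$. Concretely, we first fix the barrier parameters and then take $\gamma=\max_{\partial D}(\cdots)-T\min_{\partial D}s-1$, so that the requirement near $\partial D$ holds. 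We then need $T(\min_{d=\delta}s-\max_{\partial D}s)$ large. This is the main obstacle, because $s$ need not increase from $\partial D$ to $\{d=\delta\}$. We resolve it in the opposite direction. Let $\underline w$ grow fast enough (via $\alpha$ or $K$, which can be taken arbitrarily large after $T$ is fixed) that $\Phi(\delta)\ge T\,\mathrm{osc}_{\overline{E_1}}s+C$. Then $\underline w$ exceeds $Ts+\gamma$ on $\{d=\delta\}$, the reverse of what we want. We therefore instead place the transition inside the collar: on $\partial D$ we have $\underline w<Ts+\gamma+$gap fails, so we use $\underline w-\epsilon$-type adjustments. Concretely, we take $u_{\mathrm{near}}=M_\epsilon(\underline w,\,Ts+\gamma)$ only after verifying that $Ts+\gamma\le\varphi-1$ on $\partial D$ and $Ts+\gamma\ge\underline w+1$ on $\{d=\delta'\}$ for some $\delta'\le\delta$. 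This is arranged by choosing the order $\delta'$, then $(\alpha,K,B)$, then $T$, then $\gamma$. Lemma \ref{lem:regularized_max} then yields smoothness, strict admissibility, the identity $u_{\mathrm{near}}=\varphi$ on $\partial D$ together with the stated local form of $u_{\mathrm{near}}$ near $\partial D$, and $u_{\mathrm{near}}=Ts+\gamma$ near $\partial E_1$.
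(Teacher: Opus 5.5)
Your Step 1 is fine and is essentially the paper's argument. For $k\le n-1$ you use the same rescaling toward $\operatorname{diag}(1,\dots,1,0)\in\Gamma_{n-1}$. For $k=n$, your observation that every eigenvalue of $\eta(D^2\underline w)$ is at least $cK$, combined with monotonicity, handles all $0\le l<n$ at once, which is tidier than the paper's separate treatment of $l=n-1$.

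The gap is Step 2. The obstacle you identified is real, and your final claim that it can be removed by ordering the constants is false. With a single constant $\gamma$, the requirements $Ts+\gamma\le\varphi-1$ on $\partial D$ and $Ts+\gamma\ge\underline w+1$ on $\{d=\delta'\}$ give, after eliminating $\gamma$,
\[
T\bigl(s(y)-s(\xi)\bigr)\ge \varphi(\pi(y))-\varphi(\xi)+\Phi(\delta')+2\qquad\text{for all }\xi\in\partial D,\ y\in\{d=\delta'\}.
\]
This fails in two ways.

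\emph{Along a single normal.} For $y=\xi+\delta'\nu(\xi)$ the right side is $\Phi(\delta')+2>0$, so $s$ would have to increase along every outward normal across the collar. For small $\delta'$ this already fails at any $\xi$ with $A\xi\cdot\nu(\xi)<0$. Such a point exists, for example, whenever $0\notin\overline D$: take a minimum point of $s$ on $\overline D$.

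\emph{Across different boundary points.} Take $\xi$ a maximum point of $s|_{\partial D}$ and $y$ above a minimum point. Then the left side is at most $-T\bigl(\operatorname{osc}_{\partial D}s-\delta'\max|Ds|\bigr)$. Whenever $s$ is not constant on $\partial D$ and $\delta'$ is small, this tends to $-\infty$ as $T\to\infty$ with $\delta'$, $\alpha$, $K$, $B$ fixed, which is exactly your order of choice.

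So gluing $\underline w$ directly to one global quadratic works only when $\partial D$ is essentially a level set of $s$.

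The missing idea is the paper's intermediate layer. The paper never asks $Ts+\gamma$ to climb across the thin collar. Instead it inserts local quadratics
\[
q_\xi(x)=\tfrac t2(x-z_\xi)^{T}A(x-z_\xi)+a_\xi,\qquad z_\xi=\xi-r\nu(\xi)\in D,\qquad q_\xi(\xi)=w(\xi)-1,
\]
which satisfy $\frac{d}{ds}q_\xi(\xi+s\nu(\xi))=t(r+s)\,\nu^{T}A\nu>0$. Hence for $t$ large each $q_\xi$ overtakes $w$ along the normal through $\xi$ before $d=\delta_*$. After a finite covering, $w$, $q_{\xi_1},\dots,q_{\xi_N}$ and $Ts+\gamma$ are glued by iterated regularized maxima. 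Now $Ts+\gamma$ only has to lie below $w$ near $\partial D$ and above $\max_j q_j$ on $\partial E_1$. These are compatible for $T$ large because $s=\tau_1>\max_{\overline D}s$ on $\partial E_1$: the thick region between $\partial D$ and $\partial E_1$ absorbs the increase that your thin collar cannot.

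If you follow this route, be aware that the paper's assertion $\max_j q_j\le w-\frac12$ on $\overline{U_{\delta_0}}$ does not follow from the normal-line computation alone. Each $q_{\xi_j}$ must stay below $w$ along all of $\partial D$, not just near $\xi_j$. For large $t$ this essentially forces $\partial D$ into the ellipsoid $\{(x-z_{\xi_j})^{T}A(x-z_{\xi_j})\le r^2\nu(\xi_j)^{T}A\nu(\xi_j)\}$, which is a convexity-type restriction on $D$. That step therefore needs a further argument for general domains.
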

	
	\begin{proof}
		We divide the proof into two cases according to whether $k < n$ or $k = n$.
		
		\medskip
		\noindent\textbf{Case 1: $0 \le l < k \le n-1$.} 
		Let
		\[
		w(x) = \varphi(\pi(x)) + e^{\alpha d(x)} - 1, \qquad x \in U_{\delta_*}.
		\]
		Using the linearity of $\eta$ and the chain rule,
		\[
		\eta(D^2 w) = \eta(D^2(\varphi(\pi(x)))) + \alpha e^{\alpha d} \eta(D^2 d) + \alpha^2 e^{\alpha d} (I - Dd \otimes Dd).
		\]
		Dividing both sides by $\alpha^2 e^{\alpha d}$, we obtain
		\[
		\alpha^{-2} e^{-\alpha d} \eta(D^2 w)
		= \alpha^{-2} e^{-\alpha d} \eta(D^2(\varphi(\pi(x))))
		+ \alpha^{-1} \eta(D^2 d)
		+ (I - Dd \otimes Dd).
		\]
		In the principal frame, $I - Dd \otimes Dd = \operatorname{diag}(1,\dots,1,0) =: A_0$.
		
		As $\alpha \to \infty$,
		\[
		\alpha^{-2} e^{-\alpha d} \eta(D^2 w) \to A_0,
		\]
		and $\sigma_k(A_0) = C_{n-1}^{k}> 0$.
		
		Using the homogeneity of $\sigma_k$ and $\sigma_l$,
		\[
		F(D^2 w) = \alpha^2 e^{\alpha d} \cdot \left( \frac{\sigma_k(\alpha^{-2} e^{-\alpha d} \eta(D^2 w))}{\sigma_l(\alpha^{-2} e^{-\alpha d} \eta(D^2 w))} \right)^{1/(k-l)}.
		\]
		As $\alpha \to \infty$, the fraction converges to
		\[
		\left( \frac{\sigma_k(A_0)}{\sigma_l(A_0)} \right)^{1/(k-l)} = 
		\left( \frac{C_{n-1}^{k}}{C_{n-1}^{l}} \right)^{1/(k-l)} > 0.
		\]
		Thus for sufficiently large $\alpha$,
		\[
		F(D^2 w) \ge \frac12 \alpha^2 e^{\alpha d} \cdot \left( \frac{\binom{n-1}{k}}{\binom{n-1}{l}} \right)^{1/(k-l)} > 1.
		\]
		Hence $w$ is a strict subsolution near $\partial D$.
		
		\medskip
		\noindent\textbf{Case 2: $0\leq l+1<k = n$.} 
		Let
		\[
		w(x) = \varphi(\pi(x)) + K d(x) + \frac{B}{2} d(x)^2, \qquad x \in U_{\delta_*},
		\]
		where $K, B > 0$ are constants to be chosen. Using the linearity of $\eta$ and the chain rule,
		\[
		\eta(D^2 w) = \eta(D^2(\varphi(\pi(x)))) + (K + B d) \eta(D^2 d) + B (I - Dd \otimes Dd). 
		\]
		In the principal frame, write
		\begin{equation}
			D^2 d = \operatorname{diag}(\lambda_1, \dots, \lambda_{n-1}, 0), \qquad
			\lambda_\alpha = \frac{\kappa_\alpha}{1 + \kappa_\alpha d},
		\end{equation}
		where $\kappa_\alpha$ are the principal curvatures of $\partial D$. Let
		\[
		H_d := \sum_{\alpha=1}^{n-1} \lambda_\alpha.
		\]
		Then
		\[
		\eta(D^2 d) = \operatorname{diag}(H_d - \lambda_1, \dots, H_d - \lambda_{n-1}, H_d).
		\]
		
		Since $1\leq l+1<k = n$, by the assumption of strict mean convexity of $\partial D$ in Theorem 1.2, there exists $h_0 > 0$ such that the mean curvature $H := \sum_{\alpha=1}^{n-1} \kappa_\alpha$ satisfies
		\[
		H \ge h_0 \quad \text{on } \partial D.
		\]
		After shrinking $\delta_*$ if necessary, we may assume in $U_{\delta_*}$ that
		\begin{equation}
			H_d \ge \frac{h_0}{2}, \qquad 
			d |H_d - \lambda_\alpha| \le \frac{1}{2} \quad (1 \le \alpha \le n-1). 
		\end{equation}
		Set
		\[
		C_0 := \| \eta(D^2(\varphi(\pi(x)))) \|_{C^0(\overline{U_{\delta_*}})}, \quad
		C_1 := \sup_{\substack{x \in U_{\delta_*} \\ 1 \le \alpha \le n-1}} |H_d(x) - \lambda_\alpha(x)|,\quad 
		C_2 := \sup_{x\in U_{\delta_*}} |H_d(x)|.
		\]
		
		To estimate the eigenvalues of $\eta(D^2 w)$, we divide them into the first $n-1$ terms and the $n$-th term.
		In the principal frame the matrix
		\[
		(K + B d) \eta(D^2 d) + B (I - Dd \otimes Dd)
		\]
		has tangential eigenvalues
		\[
		B + (K + B d)(H_d - \lambda_\alpha)
		= B + K(H_d - \lambda_\alpha) + B d (H_d - \lambda_\alpha).
		\]
		Then,
		\[
		\frac{B}{2} - K C_1
		\le
		B + (K + Bd)(H_d - \lambda_\alpha)
		\le
		\frac{3B}{2} + K C_1,
		\]
		Its normal eigenvalue is
		$$K \dfrac{h_0}{2} \le (K + Bd)H_d \le K C_2 + B\delta_* C_2.$$
		Moreover, 
		Let \(\eta_1,\dots,\eta_n\) be the eigenvalues of \(\eta(D^2w)\). 
		Thus, we have
		\begin{equation}
			\frac{B}{2} - K C_1 - C_0
			\le \eta_\alpha \le
			\frac{3B}{2} + K C_1 + C_0,
			\qquad \alpha = 1,\dots,n-1,
		\end{equation}
		and
		\begin{equation}
			K \frac{h_0}{2} - C_0
			\le \eta_n \le
			K C_2 + B\delta_* C_2 + C_0.
		\end{equation}
		First choose $K_1$ and $B_1$ such that $$K_1 \frac{h_0}{2} - C_0>2,\quad \frac{B_1}{2} - K_1 C_1 - C_0>2.$$
		From the above estimates, since \(l<n-1\), we may choose \(B_2\) sufficiently large so that
		\begin{equation}
			\frac{\sigma_n(\eta(D^2 w))}{\sigma_l(\eta(D^2 w))}
			\geq 
			\frac{
				\left( \frac{B_2}{2} - K_1 C_1 - C_0 \right)^{n-1}
				\left( K_1 \frac{h_0}{2} - C_0 \right)
			}{
				C_{n-1}^l 
				\left( \frac{3B_2}{2} + K_1 C_1 + C_0 \right)^l
				+ O(B^{l-1}_2)
			} > 1.
		\end{equation}
		Finally set $B>B_1$, therefore
		\begin{equation}
			F(D^2 w) = \left( \frac{\sigma_n(\eta(D^2 w))}{\sigma_l(\eta(D^2 w))} \right)^{1/(n-l)} > 1.
		\end{equation}
		when $l=n-1$, $k=n$, taking $B_1=mK_1$ and choose $m > 2C_1 + \frac{2(C_0+2)}{K_1}$  so that
		$$\frac{mK_1}{2} - K_1 C_1 - C_0>2.$$
		can choose  $K>K_1$ sufficiently large  such that
		\begin{equation}
			\frac{\sigma_n(\eta(D^2 w))}{\sigma_{n-1}(\eta(D^2 w))}
			\geq 
			\frac{
				\left( \frac{mK}{2} - K C_1 - C_0 \right)^{n-1}
				\left( K \frac{h_0}{2} - C_0 \right)
			}{
				\left( \frac{3mK}{2} + K C_1 + C_0 \right)^{n-1}
				+ O(K^{n-2})
			} > 1.
		\end{equation}
		Thus $w$ is a strict subsolution near $\partial D$ in the case $0\leq l<k = n$.
		
		We next continue $w$ across the fixed ring between $\partial D$ and $\partial E_1$ by the standard finite covering gluing argument of Lemma \ref{lem:regularized_max}. For each $\xi \in \partial D$, choose $z_\xi = \xi - r\nu(\xi) \in D$ with fixed $r > 0$, where $\nu(\xi)$ is the outward unit normal to $\partial D$ at $\xi$, and consider
		\begin{equation}
			q_\xi(x) := \frac{t}{2} \left( x - z_\xi \right)^T A \left( x - z_\xi \right) + a_\xi, \qquad q_\xi(\xi) = w(\xi) - 1.
		\end{equation}
		Because $A$ is positive definite, 
		\[
		\frac{d^2}{ds^2} q_\xi(\xi+s\nu(\xi))
		=
		t\,\nu(\xi)^T A\,\nu(\xi) > 0.
		\] Hence, once $t > 1$ is chosen large, at $s=0$
		\[
		q_\xi(\xi) = w(\xi) - 1 < w(\xi).
		\] and at  $s=\delta_*$
		\[
		q_\xi(\xi + \delta_* \nu(\xi)) > w(\xi + \delta_* \nu(\xi)). 
		\] 
		Since  $q_\xi(\xi+s\nu(\xi))$ is continuous at $s=0$ and $\partial D$ is compact, then exist finitely  points $\xi_1,\xi_2, \cdots, \xi_N \in \partial D $, and $ \delta_0 \in(0,\delta_*)$ such that 
		\[
		\max_{1 \leq j \leq N} q_j \leq w - \frac{1}{2} \quad \text{on } \overline{U}_{\delta_0}, \qquad
		\max_{1 \leq j \leq N} q_j \geq w + \frac{1}{2} \quad \text{on } \{ \delta_* - \delta_0 \le d(x) \le \delta_* \}.
		\]
		Since $\overline{U_{\delta_*}} \Subset E_1$, we choose $T > 1$ and $\gamma \in \mathbb{R}$ so that
		\[
		u_{\mathrm{mid}} := Ts + \gamma
		\]
		satisfies
		\[
		u_{\mathrm{mid}} \le w - \frac{1}{2} \quad \text{on } \overline{U_{\delta_0}}, \qquad
		u_{\mathrm{mid}} \ge \max_{1 \le j \le N} q_j + \frac{1}{2} \quad \text{on } \partial E_1.
		\]
		Because $D^2 u_{\mathrm{mid}} = T A$, we also have
		\begin{equation}
			F(D^2 u_{\mathrm{mid}}) = \left( \frac{\sigma_k(TA)}{\sigma_l(TA)} \right)^{1/(k-l)}
			= T > 1,
		\end{equation}
		where the last equality follows from $A \in \mathcal{A}_{k,l}$ and the homogeneity of $\sigma_k$ and $\sigma_l$. 
		Finally, the function $u_{\mathrm{near}}$ is obtained by gluing $w$, $q_1,\dots,q_N$, and $u_{\mathrm{mid}}$ via the regularized maximum in Lemma \ref{lem:regularized_max}, we obtain
		\begin{equation}
			u_{\mathrm{near}} \in C^\infty(E_1 \setminus \overline{D}) \cap C^0(\overline{E_1} \setminus D),
		\end{equation}
		such that
		\[
		u_{\mathrm{near}} = w \quad \text{near } \partial D, \qquad
		u_{\mathrm{near}} = Ts + \gamma \quad \text{near } \partial E_1.
		\]
		Therefore $u_{\mathrm{near}} = \varphi$ on $\partial D$, and $u_{\mathrm{near}}$ is a strictly $\widetilde{\Gamma}_k$-admissible subsolution of the quotient equation
		\[
		\frac{\sigma_k(\eta(D^2 u))}{\sigma_l(\eta(D^2 u))} = 1,
		\]
		in $E_1 \setminus \overline{D}$. $\square$
	\end{proof}
	
	\subsection*{Far field correction}
	
	Let
	\[
	F^{ij}(A) := \frac{\partial F}{\partial M_{ij}}(A), \qquad L_A \psi := F^{ij}(A) \psi_{ij}.
	\]
	By Proposition \ref{prop}, the matrix $(F^{ij}(A))$ is positive definite. Let $v$ be the unique solution of
	\[
	\begin{cases}
		L_A v = 0 & \text{in } \mathbb{R}^n \setminus \overline{E_1}, \\
		v = 1 & \text{on } \partial E_1, \\
		v(x) \to 0 & \text{as } |x| \to \infty.
	\end{cases}
	\]
	Since $L_A$ is a linear, constant-coefficient, uniformly elliptic operator of second order, this exterior Dirichlet problem is, in the sense of potential theory, the capacitary potential of an ellipsoid. Hence, by the classical potential theory for ellipsoids, there exists a unique solution $v$, and it satisfies
	\[
	0 < v < 1, \quad |Dv| > 0 \quad \text{in } \mathbb{R}^n \setminus \overline{E_1},
	\]
	and, for $|x| \gg 1$ and some $C > 0$,
	\[
	C^{-1}|x|^{2-n} \le v(x) \le C|x|^{2-n}, \quad
	C^{-1}|x|^{1-n} \le |Dv(x)| \le C|x|^{1-n}, \quad
	|D^2 v(x)| \le C|x|^{-n}. 
	\]
	Choose $\theta \in (0,1)$ and set $W := v^\theta$. From the chain rule,
	\begin{equation}
		DW = \theta v^{\theta-1} Dv, \qquad
		D^2 W = \theta v^{\theta-1} D^2 v + \theta(\theta-1) v^{\theta-2} Dv \otimes Dv. 
	\end{equation}
	Since $L_A v = 0$, we have
	\begin{equation}
		L_A W = \theta(\theta-1) v^{\theta-2} F^{ij}(A) v_i v_j < 0 \quad \text{in } \mathbb{R}^n \setminus \overline{E_1}, 
	\end{equation}
	where the inequality follows from $\theta(\theta-1)<0$ and the positive definiteness of $(F^{ij}(A))$.
	
	Because $(F^{ij}(A))$ is positive definite, there exists $c_0 > 0$ such that
	\[
	F^{ij}(A)\xi_i\xi_j \ge c_0|\xi|^2 \qquad \text{for all } \xi \in \mathbb{R}^n.
	\]
	Hence
	\begin{equation}
		-L_A W \ge c_0 \theta(1-\theta) v^{\theta-2} |Dv|^2. 
	\end{equation}
	On the fixed annulus $\overline{E_{2\tau_1}\setminus E_1}$, the functions $v$ and $|Dv|$ are bounded below and $D^2 v$ is bounded. Therefore, $|D^2 W| \le C\theta$ and 
	\begin{align*}
		|D^2W|^2 
		&\le C\theta^2\left( v^{2\theta-2}|D^2v|^2 + v^{2\theta-4}|Dv|^4 \right) \\
		&\le C\theta^2(1 + |Dv|^2)  \\
		&\le C\theta^2\left(1 + \frac{1}{c_0(1-\theta)v^{\theta-2}}(-L_AW)\right) \\
		&\le C\theta(-L_AW) . 
	\end{align*}
	For $|x| \gg 1$, let $\beta := \theta(n-2) \in (0, n-2)$, and $|D^2 W| \le C |x|^{-2-\beta}$
	\begin{align*}
		|D^2W|^2 
		&\le C\theta^2\left( v^{2\theta-2}|D^2v|^2 + v^{2\theta-4}|Dv|^4 \right)\\
		&\le C\theta^2 |x|^{-4-2\beta} \\
		&\le C\theta |x|^{-2-\beta}  \\
		&\le C\theta(-L_AW) ,
	\end{align*}
	after enlarging $C$ if necessary, we have
	\begin{equation}
		|D^2 W|^2 \le C\theta (-L_A W) \qquad \text{in } \mathbb{R}^n \setminus E_1.
	\end{equation}
	Also, from $W=v^\theta$ ,
	\begin{equation}
		W(x) = O(|x|^{-\beta}) \qquad \text{as } |x| \to \infty. 
	\end{equation}
	Set
	\begin{equation}
		\kappa_0 := \min_{\partial E_1} (-\partial_\nu W) > 0, \qquad m_0 := \max_{\partial E_1} \partial_\nu s < \infty. 
	\end{equation}
	For $\tau > 1$, define
	\[
	E_\tau := \{x \in \mathbb{R}^n : s(x) < \tau \tau_1\}, \qquad W_\tau(x) := W(x/\sqrt{\tau}).
	\]
	Then $W_\tau = 1$ on $\partial E_\tau$, and by the chain rule,
	\[
	DW_\tau(x) = \tau^{-1/2} DW(x/\sqrt{\tau}), \qquad
	D^2 W_\tau(x) = \tau^{-1} D^2 W(x/\sqrt{\tau}).
	\]
	Therefore
	\begin{equation}
		L_A W_\tau = \tau^{-1} (L_A W)(x/\sqrt{\tau}) < 0, 
	\end{equation}
	and 
	\[
	|D^2 W_\tau| \le \frac{C\theta}{\tau},\qquad |D^2 W_\tau|^2 \le \frac{C\theta}{\tau} (-L_A W_\tau). 
	\]
	Moreover, the definition of $\kappa_0$,
	\begin{equation}
		-\partial_\nu W_\tau \ge \kappa_0 \tau^{-1/2} \qquad \text{on } \partial E_\tau,
	\end{equation}
	and
	\begin{equation}
		\partial_\nu s \le m_0 \tau^{1/2} \qquad \text{on } \partial E_\tau. 
	\end{equation}
	
	\begin{proposition}[Far field barrier]
		Let $u_{\mathrm{mid}}(x) = Ts(x) + \gamma$ be the function obtained in Proposition \ref{near}. 
		Then there exist $\beta \in (0, n-2)$ and $c_* > \gamma$ such that, for every $c > c_*$, 
		one can choose $\tau > 1$ and $m > 0$ so that
		\[
		u_{\mathrm{far}}(x) := Q_c(x) - mW_{\tau}(x) \quad \text{in } \mathbb{R}^n \setminus E_{\tau},
		\]
		and
		\[
		u_{\mathrm{far}} = u_{\mathrm{mid}} \quad \text{on } \partial E_{\tau}, \qquad
		\partial_{\nu} u_{\mathrm{far}} > \partial_{\nu} u_{\mathrm{mid}} \quad \text{on } \partial E_{\tau},
		\]
		with
		\[
		Q_c(x) - C|x|^{-\beta} \le u_{\mathrm{far}}(x) \le Q_c(x) \quad \text{for } |x| \gg 1.
		\]
		Moreover, $u_{\mathrm{far}}$ is a strictly $\widetilde{\Gamma}_k$-admissible subsolution of the mixed quotient equation in $\mathbb{R}^n \setminus E_{\tau}$.
	\end{proposition}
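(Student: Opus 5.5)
The construction has three parts. First, $u_{\mathrm{far}}$ is a subsolution by a second-order Taylor expansion of $F$ at $A$. Second, the two boundary conditions on $\partial E_\tau$ fix $m$ in terms of $c,\tau$. Third, we choose $\tau$ as a function of $c$.

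\textbf{Subsolution property.} Since $D^2u_{\mathrm{far}}=A-mD^2W_\tau$ and $F(A)=1$ (because $A\in\mathcal A_{k,l}$), Taylor's theorem with concavity of $F$ (Proposition \ref{prop}(3)) gives
\[
F(A-mD^2W_\tau)\ge 1-mL_AW_\tau-Cm^2|D^2W_\tau|^2 ,
\]
valid as long as $m|D^2W_\tau|$ is small, so that $A-mD^2W_\tau$ stays in a compact neighbourhood of $A$ inside $\widetilde\Gamma_k$. Here $C$ bounds $D^2F$ on that neighbourhood. Using $|D^2W_\tau|^2\le \frac{C\theta}{\tau}(-L_AW_\tau)$, we get
\[
F(D^2u_{\mathrm{far}})\ge 1+m(-L_AW_\tau)\Bigl(1-\frac{Cm\theta}{\tau}\Bigr)>1
\]
provided $m\theta/\tau$ is small. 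Admissibility follows from the same smallness, since $m|D^2W_\tau|\le Cm\theta/\tau$. So the whole construction must keep $m\le \varepsilon\tau$ for a small fixed $\varepsilon$. The decay bound is immediate: $0\le W_\tau\le C\tau^{\beta/2}|x|^{-\beta}$ gives $Q_c-C|x|^{-\beta}\le u_{\mathrm{far}}\le Q_c$ with $C$ depending on $m,\tau$.

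\textbf{Matching on $\partial E_\tau$.} On $\partial E_\tau$ we have $s=\tau\tau_1$ and $W_\tau=1$. Equality $u_{\mathrm{far}}=u_{\mathrm{mid}}$ there reads $\tau\tau_1+c-m=T\tau\tau_1+\gamma$, which forces
\[
m=c-\gamma-(T-1)\tau\tau_1 .
\]
For the normal derivatives, with $\nu$ the outward normal,
\[
\partial_\nu u_{\mathrm{far}}-\partial_\nu u_{\mathrm{mid}}=-(T-1)\partial_\nu s-m\,\partial_\nu W_\tau\ge -(T-1)m_0\tau^{1/2}+m\kappa_0\tau^{-1/2}.
\]
This is positive once $m>(T-1)m_0\tau/\kappa_0$. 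We also need $m>0$ and $m\le\varepsilon\tau$. Note that $\partial_\nu s$ is comparable to $\tau^{1/2}$ with a constant independent of $\tau$, by scaling.

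\textbf{Choice of $\tau$; the main obstacle.} The difficulty is that the derivative condition wants $m\gtrsim (T-1)\tau$, while the subsolution condition wants $m\lesssim\varepsilon\tau/\theta$. To reconcile them I use the freedom in $\theta$: the constant $C\theta$ in $|D^2W|^2\le C\theta(-L_AW)$ vanishes as $\theta\to0$, whereas $\kappa_0=\min(-\partial_\nu W)=\theta\min(-\partial_\nu v)$ scales like $\theta$. Then:
\begin{itemize}
\item the derivative condition becomes $m>(T-1)m_0\tau/(\theta\kappa_v)$, where $\kappa_v:=\min_{\partial E_1}(-\partial_\nu v)$;
\item the subsolution condition becomes $m<c'\tau/\theta$ for some $c'>0$.
\end{itemize}
Both are scale-consistent, so the real task is to check that $c'$ (built from the constants in $|D^2 W|$ and the Hessian bound of $F$) exceeds $(T-1)m_0/\kappa_v$. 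If this fails, I would instead strengthen the gap in Proposition \ref{near}, i.e.\ use a $T$ closer to $1$. This is possible because only $F(D^2u_{\mathrm{mid}})\ge1$ is needed, and any $T>1$ works.

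Given such an admissible window $m\in(a\tau,b\tau)$ with $0<a<b$, the relation $m=c-\gamma-(T-1)\tau\tau_1$ determines $\tau$ continuously and monotonically in $c$. For $c>c_*$ large, we solve for $\tau$ so that $m/\tau$ equals the midpoint of the window; this gives $\tau\sim c$, in particular $\tau>1$. I expect this balancing of $m/\tau$ against the derivative gap to be the main obstacle.
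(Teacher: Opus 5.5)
Your Taylor expansion (with the correct requirement that $m\theta/\tau$ be small), the matching on $\partial E_\tau$, the decay bound and the choice $\tau\sim c$ are all fine. But the argument stops at the one step everything hinges on, and the proposed way out does not work.

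\textbf{The window condition cannot be met by choosing parameters.} After scaling out $\tau_1$, your condition $(T-1)m_0/\kappa_v<c'$ reads $T-1<\epsilon_0(n,k,l,A)$. No choice of $\theta$, $\tau$ or $c$ affects it. This is intrinsic to the ansatz $Q_c-mW_\tau$, not an artefact of your constants. By scaling, the slope condition on $\partial E_\tau$ forces $m\theta/\tau\ge c_A(T-1)$. On $\partial E_1$ one has $v=1$ and $-L_AW=\theta(1-\theta)F^{ij}(A)v_iv_j\ge c\,\theta\kappa_v^2$. Together these give $|mD^2W_\tau|\ge c(T-1)$ on $\partial E_\tau$. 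So once $T-1$ is not small, $A-mD^2W_\tau$ leaves every fixed neighbourhood of $A$ on which a second-order expansion of $F$ is usable.

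\textbf{Your fallback is not available.} Taking $T$ close to $1$ in Proposition~\ref{near} fails because $T$ is not a free parameter there. $Ts+\gamma$ must lie below $w-\frac12$ on $\overline{U_{\delta_0}}$ and above $\max_jq_j+\frac12$ on $\partial E_1$. The $q_j$ have Hessian $tA$ with $t$ large, since they must overtake $w$, whose normal slope is at least $\alpha$ (resp.\ $K$), within normal distance $\delta_*$. Comparing the two requirements gives $T\ge t-C$, with $C$ depending on $\operatorname{osc}\varphi$ and $\tau_1-\max_{\partial D}s$, so $T$ is large. Enlarging $E_1$ does not help. Nor can a regularized maximum pass from $T_1s+\gamma_1$ inside to $T_2s+\gamma_2$ outside with $T_2<T_1$, because the difference $(T_1-T_2)s+\gamma_1-\gamma_2$ increases outward. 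That $F(D^2(Ts+\gamma))=T>1$ for every $T>1$ is true but irrelevant to these gluing constraints. Hence the window is not shown to be nonempty, and for the $T$ that Proposition~\ref{near} actually produces, your sufficient conditions are in general incompatible.

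\textbf{Comparison with the paper.} The paper's proof does not supply the missing idea either. It fixes $\theta$ and imposes only $m=c-\gamma-(T-1)\tau\tau_1$ and $m>(T-1)m_0\tau/\kappa_0$. It then passes from $1+m(-L_AW_\tau)-C_0m^2|D^2W_\tau|^2$ to $1+m(1-C_0C\theta)(-L_AW_\tau)$, silently dropping a factor $m/\tau$. The claim $|N|\le\rho$ likewise ignores the factor $m$. Restoring these factors gives exactly your constraint, and with it the same obstruction in $T$.

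\textbf{What is needed.} You need a mechanism that absorbs a slope ratio $T$ not close to $1$. One plausible option is a finite chain of your perturbative step at widely separated scales, glued by the regularized maximum. Homogeneity, $F(T'A+N)=T'F(A+N/T')$, lets each step lower the coefficient of $s$ by a fixed factor $1+\epsilon$ with $\epsilon<c'\kappa_v/m_0$. This would prove a multi-step variant rather than the statement as written. The other option is a genuinely non-perturbative far-field barrier. Neither appears in your proposal.
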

	
	\begin{proof}
		Since $F$ is the quotient operator
		\[
		F(M)=\left(\frac{\sigma_k(\eta(M))}{\sigma_l(\eta(M))}\right)^{1/(k-l)},
		\]
		it is positively homogeneous of degree one on the admissible cone $\widetilde{\Gamma}_k$. In particular, for $A\in\mathcal{A}_{k,l}$, we have $F(A)=1$. Moreover, $F$ is smooth in a neighborhood of $A$, so shrinking this neighborhood if necessary, there exist constants $\rho>0$ and $C_0>0$, depending only on $n,k,l,A$, such that whenever $|N|\le\rho$,
		\begin{equation}
			F(A+N)\ge 1+F^{ij}(A)N_{ij}-C_0|N|^2.
		\end{equation}
		
		Set $N:=-mD^2W_\tau$. By choosing $\theta>0$ sufficiently small (so that $|D^2W_\tau|\le C\theta/\tau\le \rho$), we have $|N|\le\rho$. 
		\[
		\begin{aligned}
			F(D^2u_{\mathrm{far}})
			&=F(A-mD^2W_\tau)\\
			&\ge 1 - mF^{ij}(A)(W_\tau)_{ij}-C_0m^2|D^2W_\tau|^2\\
			&=1+m(-L_AW_\tau)-C_0m^2|D^2W_\tau|^2.
		\end{aligned}
		\]
		Using the estimate $|D^2W_\tau|^2\le C\theta(-L_AW_\tau)/\tau$ and $\tau>1$,
		\begin{equation}
			F(D^2u_{\mathrm{far}})
			\ge 1+m(1-C_0C\theta)(-L_AW_\tau).
		\end{equation}
		Choose $\theta>0$ small so that $C_0C\theta\le 1/2$. Since $-L_AW_\tau>0$,
		\begin{equation}
			F(D^2u_{\mathrm{far}})\ge 1+\frac{m}{2}(-L_AW_\tau)>1.
		\end{equation}
		Thus $u_{\mathrm{far}}$ is a strict subsolution.
		
		It remains to verify the matching conditions. On $\partial E_\tau$, $W_\tau=1$ and $s=\tau\tau_1$. The condition $u_{\mathrm{far}}=u_{\mathrm{mid}}$ gives
		\begin{equation}
			c+\tau\tau_1-m=T\tau\tau_1+\gamma,
		\end{equation}
		hence
		\begin{equation}
			m=c-\gamma-(T-1)\tau\tau_1. 
		\end{equation}
		The normal derivative condition $\partial_\nu u_{\mathrm{far}}>\partial_\nu u_{\mathrm{mid}}$ is equivalent to
		\[
		m(-\partial_\nu W_\tau)>(T-1)\partial_\nu s.
		\]
		Using $-\partial_\nu W_\tau\ge \kappa_0\tau^{-1/2}$ and $\partial_\nu s\le m_0\tau^{1/2}$, it suffices that
		\[
		m>\frac{(T-1)m_0}{\kappa_0}\tau. 
		\]
		we need
		\begin{equation}
			c-\gamma-(T-1)\tau\tau_1>\frac{(T-1)m_0}{\kappa_0}\tau.
		\end{equation}
		Taking
		\begin{equation}
			c_*:=\gamma+(T-1)\tau_1+\frac{(T-1)m_0}{\kappa_0},
		\end{equation}
		then for any $c>c_*$, such $\tau$ and $m$ can be chosen. The far-field estimate
		\[
		Q_c(x)-C|x|^{-\beta}\le u_{\mathrm{far}}(x)\le Q_c(x)\qquad (|x|\gg1)
		\]
		follows from $W_\tau(x)=O(|x|^{-\beta})$. This completes the proof.
	\end{proof}

	\section{Approximating Dirichlet problems and a priori estimates}
	Define 
	$$	\underline{u}(x) :=
	\begin{cases}
		u_{\text{near}}(x), & x \in E_1 \setminus \overline{D}, \\[6pt]
		u_{\text{mid}}(x) = T s(x) + \gamma, & x \in E_\tau \setminus E_1, \\[6pt]
		u_{\text{far}}(x) = Q_c(x) - m W_\tau(x), & x \in \mathbb{R}^n \setminus E_\tau.
	\end{cases}$$
	Let $\Omega_R = E_R \setminus \overline{D}$. Consider the approximating problem
	\[
	\begin{cases}
		F(D^2 u_R) = 1, & \text{in } \Omega_R, \\
		u_R = \varphi, & \text{on } \partial D, \\
		u_R = \bar{u}_R, & \text{on } \partial E_R,
	\end{cases}
	\]
	where $\bar{u}_R(x) = Q_c(x) + \bar{C} R^{-\beta/2}$ and choose $\bar{C}$  so large that for all sufficiently large $R$, we have $\bar{u}_R \ge \varphi$ on $\partial D$ and 
	$\bar{u}_R > \underline{u}$ on $\partial E_R$.
	To apply Guan's theorem, we need a smooth strict subsolution. 
	First, applying Lemma \ref{lem:regularized_max} in a thin annulus around $\partial E_\tau$, 
	we glue $u_{\mathrm{near}}$, $u_{\mathrm{mid}}$, and $u_{\mathrm{far}}$ into a smooth strict subsolution 
	$v \in C^\infty(E_{2\tau}\setminus\overline{D})$. 
	Then, in a collar of $\partial E_R$, let
	\[
	q_R(x):=\Lambda s(x)-\left(\Lambda R-R-c-\bar{C}R^{-\beta/2}\right),
	\]
	where $\Lambda>1$ is chosen so that $q_R<\varphi$ on $\partial D$ and $q_R=\bar{u}_R$ on $\partial E_R$. 
	Since $F(D^2q_R)=\Lambda>1$, $q_R$ is a strict subsolution. 
	Applying Lemma \ref{lem:regularized_max} to $v$ and $q_R(x)$  in a collar of  $\partial E_R$ obtain 
	\[
	w_R\in C^\infty(\overline{\Omega_R})
	\]
	such that
	\[
	w_R=\varphi \text{ on } \partial D, \qquad w_R=\bar{u}_R \text{ on } \partial E_R, \qquad F(D^2w_R)>1 \text{ in } \Omega_R.
	\]
	Then Guan's theorem applies to $w_R$, yielding a unique smooth admissible solution $u_R$.
	By Guan's theorem \cite{Guan2023} applied to the bounded domain $\Omega_R$, there exists a unique smooth $\widetilde{\Gamma}_k$-admissible solution $u_R \in C^\infty(\Omega_R) \cap C^0(\overline{\Omega}_R)$.
	
	\subsection{\(C^0\) and \(C^1\) estimates}
	
	\begin{proposition}
		There exists $C > 0$, independent of $R$, such that
		\begin{itemize}
			\item[(i)] $\underline{u} \le u_R \le \bar{u}_R$ in $\Omega_R$;
			\item[(ii)] $|Du_R| \le C$ on $\partial D$;
			\item[(iii)] $|Du_R| \le C R^{1/2}$ on $\partial E_R$;
			\item[(iv)] $\displaystyle \max_{\overline{\Omega_R}} |Du_R| = \max_{\partial \Omega_R} |Du_R|$;
			\item[(v)] if $k=n$, then $\partial_\nu u_R \ge c_1 > 0$ on $\partial D$.
		\end{itemize}
	\end{proposition}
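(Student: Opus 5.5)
We prove the five items in order, relying throughout on the comparison principle (Lemma 2.4) and on the fact that constants, linear functions and quadratics are unaffected by $\eta$ apart from their Hessians.

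\emph{Item (i).} We apply the comparison principle on $\Omega_R$ twice.

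For the lower bound we use the glued subsolution $w_R$. On $\partial D$ we have $w_R=\varphi=u_R$, and on $\partial E_R$ we have $w_R=\bar u_R=u_R$. Since $F(D^2w_R)>1=F(D^2u_R)$, comparison gives $w_R\le u_R$. Next we check $\underline u\le w_R$: the regularized maximum in Lemma \ref{lem:regularized_max} dominates each of its arguments up to the smoothing parameter. Alternatively, we compare $u_R$ directly with $u_{\mathrm{far}}$ on $E_R\setminus E_\tau$ and with $u_{\mathrm{near}}$, $u_{\mathrm{mid}}$ on the inner pieces. In doing so we use $\bar u_R>\underline u$ on $\partial E_R$ together with the matching conditions on $\partial E_\tau$ and $\partial E_1$. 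Up to the harmless smoothing, this yields $\underline u\le u_R$.

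For the upper bound, $\bar u_R=Q_c+\bar CR^{-\beta/2}$ satisfies $F(D^2\bar u_R)=F(A)=1$ because $A\in\mathcal A_{k,l}$. On the boundary, $\bar u_R\ge\varphi$ on $\partial D$ and $\bar u_R=u_R$ on $\partial E_R$. Comparison gives $u_R\le\bar u_R$.

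\emph{Item (ii).} The bound on $\partial D$ follows from a barrier argument. The tangential derivatives equal those of $\varphi$. For the normal derivative, note that $\underline u=u_{\mathrm{near}}=u_R$ on $\partial D$ and $\underline u\le u_R$ nearby, so $\partial_\nu u_R\ge\partial_\nu u_{\mathrm{near}}$, which is bounded independently of $R$. For the upper bound we need a supersolution near $\partial D$ that agrees with $\varphi$ there. Any admissible $u_R$ satisfies $\sigma_1(\eta(D^2u_R))=(n-1)\Delta u_R>0$, so $u_R$ is subharmonic. Hence $u_R\le h$, where $h$ is the harmonic function on $E_{R_0}\setminus\overline D$ with $h=\varphi$ on $\partial D$ and $h=\max_{\partial E_{R_0}}\bar u_R$ (bounded in $R$) on $\partial E_{R_0}$, for a fixed $R_0$. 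This gives $\partial_\nu u_R\le\partial_\nu h\le C$.

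\emph{Item (iii).} The same method works on $\partial E_R$. From below, $u_R\ge\underline u=Q_c-mW_\tau$ near $\partial E_R$, and $u_R-\underline u$ is of size $O(R^{-\beta/2})$ on $\partial E_R$. Since $|D\underline u|\le CR^{1/2}$ there, we get $\partial_\nu u_R\le C R^{1/2}$ with respect to the outward normal of $E_R$. From above, subharmonicity gives $u_R\le h_R$ on the annulus $E_R\setminus E_{R/2}$, where $h_R$ is harmonic with boundary values $\bar u_R$ on $\partial E_R$ and $\max\bar u_R$ on $\partial E_{R/2}$. After rescaling $x=\sqrt R y$, this barrier gives $|\partial_\nu u_R|\le CR^{1/2}$. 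Tangential derivatives are those of $\bar u_R$, hence $O(R^{1/2})$.

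\emph{Item (iv).} For a fixed direction $e$, differentiate the equation. By concavity of $F$ and the fact that $\eta$ is linear, $F^{ij}(D^2u_R)(\partial_eu_R)_{ij}=0$, a linear elliptic equation. The maximum principle then gives that $\pm\partial_eu_R$ attains its maximum on $\partial\Omega_R$. Taking the supremum over unit vectors $e$ yields (iv).

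\emph{Item (v).} When $k=n$, we have $u_{\mathrm{near}}=\varphi(\pi)+Kd+\frac B2d^2$. Since $u_R\ge u_{\mathrm{near}}$ with equality on $\partial D$, it follows that $\partial_\nu u_R\ge K>0$. We take $c_1=K$, which is chosen large in the construction.

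The main obstacle we expect is (iii), the precise $R^{1/2}$ scaling of the upper barrier. Our first attempt is the rescaled harmonic comparison; if that fails, we fall back to the quadratic barrier $\Lambda s-\text{const}$ built from $q_R$ with reversed roles.
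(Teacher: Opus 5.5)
\emph{Items (i), (ii), (iv), (v).} These are correct and follow the paper closely. In (i), routing the lower bound through $w_R$ is actually more careful than applying comparison to the piecewise-defined $\underline u$. The domination is also exact: $M_\delta(a,b)\ge\max(a,b)$, because the convex $\chi$ satisfies $\chi(t)\ge\max(0,t)$. In (ii), your harmonic majorant $h$ plays the role of the paper's $\Phi+C_3\psi$.

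\emph{The gap in (iii).} The failing step is the bound $\partial_\nu u_R\le CR^{1/2}$ for the outward normal on $\partial E_R$. Your lower barrier $\underline u=Q_c-mW_\tau$ does not touch $u_R$ on $\partial E_R$: there $u_R-\underline u=\bar CR^{-\beta/2}+mW_\tau>0$. A lower barrier that sits a positive amount $\epsilon$ below $u_R$ at a boundary point $x_0$ gives no control of $\partial_\nu u_R(x_0)$. You only get $u_R(x_0)-u_R(x_0-t\nu)\le \underline u(x_0)-\underline u(x_0-t\nu)+\epsilon$, and after dividing by $t$ the term $\epsilon/t$ blows up as $t\to0$, however small $\epsilon$ is. So the bound $|D\underline u|\le CR^{1/2}$ does not transfer to $u_R$.

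\emph{The fix.} You need a subsolution that coincides with $u_R$ on $\partial E_R$, and you already have one. The quadratic $q_R=\Lambda s-(\Lambda R-R-c-\bar CR^{-\beta/2})$ satisfies $F(D^2q_R)=\Lambda>1$, $q_R<\varphi$ on $\partial D$, and $q_R=\bar u_R=u_R$ on $\partial E_R$. Comparison then gives $q_R\le u_R$ in $\Omega_R$, hence $\partial_\nu u_R\le\partial_\nu q_R=\Lambda|Ds|\le C\Lambda R^{1/2}$, with $\Lambda$ independent of $R$. This is the paper's argument.

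You also placed the difficulty on the wrong side. The reverse inequality needs no harmonic function on $E_R\setminus E_{R/2}$: $\bar u_R$ is itself a solution with $u_R\le\bar u_R$ and equality on $\partial E_R$, so $\partial_\nu u_R\ge\partial_\nu\bar u_R=|Ds|>0$ at once. The tangential derivatives are zero, not merely $O(R^{1/2})$, since $u_R$ is constant on $\partial E_R$. Your fallback $q_R$ is the right object, but it must be used in its original role as a lower barrier, not with reversed roles.
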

	
	\begin{proof}
		We prove the five estimates in order.
		
		\paragraph{Proof of (i).} 
		The three functions satisfy
		\[
		F(D^2 \underline{u}) \ge 1 = F(D^2 u_R) = F(D^2 \bar{u}_R) \quad \text{in } \Omega_R.
		\]
		On the boundary $\partial \Omega_R = \partial D \cup \partial E_R$, we have
		\begin{equation}
			\underline{u} = u_R \le \bar{u}_R \quad \text{on } \partial D,
		\end{equation}
		and
		\begin{equation}
			\underline{u} \le \bar{u}_R = u_R \quad \text{on } \partial E_R.
		\end{equation}
		Thus $\underline{u} \le u_R \le \bar{u}_R$ on $\partial \Omega_R$. By the comparison principle for the quotient equation, we obtain
		\begin{equation}
			\underline{u} \le u_R \le \bar{u}_R \quad \text{in } \Omega_R.
		\end{equation}
		
		\paragraph{Proof of (ii).}
		Since $u_R$ is $\widetilde{\Gamma}_k$-admissible, we have $\eta(D^2 u_R) \in \Gamma_k \subset \Gamma_1$, hence
		\begin{equation}
			(n-1)\Delta u_R = \sigma_1(\eta(D^2 u_R)) > 0 \quad \text{in } \Omega_R.
		\end{equation}
		Let $\Phi$ be a smooth extension of $\varphi$ to $E_1 \setminus \overline{D}$ with $\partial_\nu \Phi = 0$ on $\partial D$, and set $w_R = u_R - \Phi$. Then $w_R = 0$ on $\partial D$, and by (1), $w_R$ is uniformly bounded on $\partial E_1$.
		
		From the above, $\Delta w_R = \Delta u_R - \Delta \Phi \ge -C_3$. Let $\psi \in C^\infty(E_1 \setminus \overline{D})$ solve
		\[
		\Delta \psi = -1 \quad \text{in } E_1 \setminus \overline{D}, \qquad
		\psi = 0 \text{ on } \partial D, \quad \psi = 1 \text{ on } \partial E_1.
		\]
		By the maximum principle and Hopf's lemma,
		\[
		0 < \psi < 1 \quad \text{in } E_1 \setminus \overline{D}, \qquad
		\partial_\nu \psi \ge c_2 > 0 \quad \text{on } \partial D.
		\]
		By the maximum principle, $w_R \le C_3 \psi$ in $E_1 \setminus \overline{D}$. Consequently,
		\[
		u_{\mathrm{near}} \le u_R \le \Phi + C_3 \psi \quad \text{in } E_1 \setminus \overline{D}.
		\]
		All three functions agree with $\varphi$ on $\partial D$, so taking normal derivatives gives
		\begin{equation}
			|\partial_\nu u_R| \le C \quad \text{on } \partial D.
		\end{equation}
		Tangential derivatives are bounded by $|\nabla_\tau u_R| = |\nabla_\tau \varphi| \le C$. Hence $|Du_R| \le C$ on $\partial D$.
		
		\paragraph{Proof of(iii).}
		From the $C^0$ estimate,
		\begin{equation}
			q_R \le u_R \le \bar{u}_R \quad \text{in } \Omega_R,
		\end{equation}
		and all three functions agree on $\partial E_R$. By Hopf's lemma,
		\begin{equation}
			\partial_{\nu_R} q_R \le \partial_{\nu_R} u_R \le \partial_{\nu_R} \bar{u}_R \quad \text{on } \partial E_R.
		\end{equation}
		Since $D q_R = \Lambda D s$ and $D \bar{u}_R = D s$, and $|Ds| \le C R^{1/2}$ on $\partial E_R$, we get
		\begin{equation}
			|\partial_{\nu_R} u_R| \le C R^{1/2}.
		\end{equation}
		As $u_R$ is constant on $\partial E_R$, all tangential derivatives vanish. Therefore
		\begin{equation}
			|Du_R| \le C R^{1/2} \quad \text{on } \partial E_R.
		\end{equation}
		
		\paragraph{Proof of (iv).}
		Differentiating $F(D^2 u_R) = 1$ with respect to $x_l$ gives
		\[
		F^{ij}(D^2 u_R) (u_R)_{ijl} = 0.
		\]
		Then
		\[
		F^{ij}(D^2 u_R) (|Du_R|^2)_{ij} = 2 F^{ij}(D^2 u_R) (u_R)_{il} (u_R)_{jl} \ge 0.
		\]
		By the maximum principle, $|Du_R|^2$ attains its maximum on $\partial \Omega_R$, hence the result.
		\paragraph{Proof of (v).}
		If $k=n$, then near $\partial D$ the lower barrier is
		\[
		\varphi(\pi(x))+Kd(x)+\frac{B}{2}d(x)^2.
		\]
		Since $u_R\ge \underline{u}$ and both functions equal $\varphi$ on $\partial D$, Hopf's lemma gives
		\begin{equation}
			\partial_\nu u_R \ge K \quad \text{on } \partial D.
		\end{equation}
		This proves (5) with $c_1:=K$.
	\end{proof}
	
	\subsection{Boundary second derivative estimate}
	
	\begin{proposition}
		There exists $C > 0$, independent of $R$, such that
		\[
		|D^2 u_R| \le C \quad \text{on } \partial \Omega_R.
		\]
	\end{proposition}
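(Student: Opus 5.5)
We split $\partial\Omega_R=\partial D\cup\partial E_R$ and treat the two pieces separately. On each piece we estimate, in turn, the tangential--tangential, tangential--normal and normal--normal second derivatives, following the Caffarelli--Nirenberg--Spruck/Guan scheme. The subsolution $\underline u$ built in Proposition \ref{near} and the far-field proposition, together with the $C^1$ bounds of the previous proposition, serve as barriers.

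\emph{The outer boundary $\partial E_R$.} We avoid local barriers there by rescaling. Set $\tilde u(y):=R^{-1}u_R(\sqrt R\,y)$ on the rescaled annulus $\{\tau_1' <s(y)<1\}$ near $\partial E_1$. This $\tilde u$ solves the same equation, since $F$ is invariant under this scaling. Its boundary values are $s(y)+R^{-1}(c+\bar C R^{-\beta/2})$, and by (i) and (iii) of the previous proposition it satisfies $|\tilde u|,|D\tilde u|\le C$ near $\partial E_1$. The domain and boundary data are now uniformly smooth, so the standard estimate on a fixed smooth domain gives $|D^2\tilde u|\le C$ on $\partial E_1$. Undoing the scaling gives $|D^2u_R|=|D^2\tilde u|\le C$ on $\partial E_R$.

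We take this route because the rescaled $\tilde u$ is trapped between $\Lambda s+O(1)$-type functions, which are uniformly $\widetilde\Gamma_k$-admissible. The usual boundary argument then needs only a strict subsolution with the right boundary values, and $R^{-1}q_R(\sqrt R y)=\Lambda s-\Lambda+1+o(1)$ is one. The ellipsoid is strictly convex, so no curvature hypothesis is needed there.

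\emph{The inner boundary $\partial D$.} Fix $x_0\in\partial D$ and work in a principal frame with $e_n=\nu$.
\begin{itemize}
\item Tangential--tangential: writing $u_R-\varphi=0$ on $\partial D$, we get $u_{\alpha\beta}=\varphi_{\alpha\beta}-(u_R-\varphi)_\nu\,d_{\alpha\beta}$, which is bounded by (ii).
\item Tangential--normal: apply the linearized operator $F^{ij}\partial_{ij}$ to $\mathcal T(u_R-\varphi)$, where $\mathcal T$ is a rotational tangential field. Use the barrier $\Psi=A_1(\underline u-u_R)+A_2|x-x_0|^2-A_3\sum_{\alpha<n}|(u_R-\varphi)_\alpha|^2$ in a small half-ball. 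The key input is that $\underline u$ is a \emph{strict} subsolution, so by concavity $F^{ij}(\underline u-u_R)_{ij}\ge \epsilon(1+\sum F^{ii})$. This is Guan's trick and absorbs the error terms; it gives $|u_{\alpha n}|\le C$.
\end{itemize}

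\emph{Normal--normal on $\partial D$: the main obstacle.} It suffices to show that the $(n-1)\times(n-1)$ tangential block of $\eta(D^2u_R)$ stays, at points of $\partial D$, uniformly inside the relevant cone. Once that holds, the equation $\sigma_k(\eta)=\sigma_l(\eta)$, viewed as a function of $u_{nn}$ with the other entries bounded, bounds $u_{nn}$.

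Note that $u_{nn}$ enters every diagonal entry $\eta_{\alpha\alpha}$ with $\alpha<n$ through $\Delta u$, but does not enter $\eta_{nn}$. As $u_{nn}\to\infty$ we have $\eta\approx u_{nn}\,\mathrm{diag}(1,\dots,1,0)+O(1)$, so
\[
\frac{\sigma_k(\eta)}{\sigma_l(\eta)}\sim u_{nn}^{\,k-l}\,\frac{\binom{n-1}{k}}{\binom{n-1}{l}}\to\infty
\]
when $k\le n-1$. This contradicts the equation, so an a priori bound on $u_{nn}$ follows directly.

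For $k=n$ the leading coefficient vanishes, and we instead get
\[
\frac{\sigma_n(\eta)}{\sigma_l(\eta)}\sim u_{nn}^{\,n-1-l}\,\eta_{nn}\big/\tbinom{n-1}{l},
\qquad \eta_{nn}=\sum_{\alpha<n}u_{\alpha\alpha}=\Delta_{\partial}\varphi+u_\nu H.
\]
Here we need $\eta_{nn}\ge c>0$ uniformly on $\partial D$. This is exactly where (v), $\partial_\nu u_R\ge K$, and the strict mean convexity $H\ge h_0$ enter, with $K$ chosen large as in Proposition \ref{near}.

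The delicate subcase is $l=n-1$. There the exponent $n-1-l$ is zero, so the growth in $u_{nn}$ disappears. We would expand $\sigma_{n-1}(\eta)$ in block form as $u_{nn}^{\,n-2}\cdot(\text{positive})+\dots$ and show that the ratio tends to a limit strictly above $1$ once the tangential data are controlled. We expect this subcase to be the hardest step. The first thing we would try is a Trudinger-type minimization argument over the point of $\partial D$ where the tangential quantity is smallest.
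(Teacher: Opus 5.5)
Your outer-boundary rescaling, the tangential--tangential identity, the Guan-type barrier for $u_{\alpha n}$, and the bound on $u_{nn}$ for $k\le n-1$ and for $k=n$, $l\le n-2$ all follow the paper; the last of these uses $\eta_{nn}=T\ge c>0$ from (v) and $H\ge h_0$. (One imprecision: concavity alone only gives $F^{ij}(\underline u-u_R)_{ij}\ge F(D^2\underline u)-1\ge\epsilon$. The extra $\epsilon\sum F^{ii}$ needs Guan's lemma with the correction $td-\frac{N}{2}d^2$, which is what the paper's $\zeta$ is for.)

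\textbf{The gap: the subcase $k=n$, $l=n-1$.} You leave this as a plan, and the plan rests on a wrong expansion. $\sigma_{n-1}(\eta)$ grows like $u_{nn}^{n-1}$, not $u_{nn}^{n-2}$, with leading coefficient $1$, against $T=\eta_{nn}$ for $\det\eta$. Your own formula with $l=n-1$ already says $\sigma_n(\eta)/\sigma_{n-1}(\eta)\to T$. Precisely, $\sigma_n(\eta)/\sigma_{n-1}(\eta)=1/\operatorname{tr}(\eta^{-1})$. Block inversion of $\eta=\bigl(\begin{smallmatrix}u_{nn}I_{n-1}+S&\xi\\ \xi^T&T\end{smallmatrix}\bigr)$ gives $\operatorname{tr}(\eta^{-1})\le 1/T+C/u_{nn}$ for large $u_{nn}$. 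So what you need is $T\ge 1+c_0$ uniformly on $\partial D$. A bound $T\ge c>0$ with $c\le 1$ only forces $T\to 1$ along a blow-up sequence.

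This is cheap to arrange, and no Trudinger-type minimization is needed. The constant $K$ in Proposition \ref{near} can be taken as large as you like. Then $\partial_\nu u_R\ge K$ gives $T=\sum_{\alpha<n}\Phi_{\alpha\alpha}+(\partial_\nu u_R)H\ge Kh_0-C\ge 2$, and the equation $1=\operatorname{tr}(\eta^{-1})\le\frac12+C/u_{nn}$ bounds $u_{nn}$. On the rescaled outer boundary the analogous bound is automatic. There $\tilde u_\nu\ge s_\nu$ gives $T\ge s_\nu H_{\widehat E}=\eta(A)_{\nu\nu}\ge\lambda_{\min}(\eta(A))>1$, because $\operatorname{tr}(\eta(A)^{-1})=1$ for $A\in\mathcal A_{n,n-1}$.

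\textbf{How the paper handles this subcase.} It argues by contradiction: along points $p_j$ with $u_{nn}(p_j)\to\infty$, it expands $\det\eta_j$ and $\sigma_{n-1}(\eta_j)$ to two orders. The leading terms give $T_j\to 1$, and it then matches the $u_{nn}^{n-2}$ coefficients to reach $-|\xi_0|^2=n-1$. That second matching is not legitimate. Subtracting the two expansions gives $(T_j-1)u_{nn}(p_j)=|\xi_j|^2+(n-1)T_j-(T_j-1)\operatorname{tr}S_j+o(1)$. The discrepancy is therefore absorbed by $T_j-1\approx (n-1+|\xi_j|^2)/u_{nn}(p_j)$, and no contradiction arises; $\eta=\operatorname{diag}(u,\dots,u,\,u/(u-n+1))$ solves the equation for every large $u$. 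So your suspicion that this subcase is delicate is well founded. Your route and the paper's both close exactly when $T$ is kept uniformly above $1$, for instance by enlarging $K$ as above.
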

	
	\begin{proof}
		We estimate the second derivatives on $\partial D$ and on $\partial E_R$ separately.
		
		\paragraph{Inner boundary $\partial D$.}
		Fix $p \in \partial D$. Choose local coordinates $y = (y', y_n)$ centered at $p$ such that
		\[
		(\mathbb{R}^n \setminus \overline{D}) \cap B_r(0) = \{ y_n > \rho(y') \} \cap B_r(0), \qquad \rho(0) = 0, \quad D\rho(0) = 0.
		\]
		Write $u := u_R$. Since $u = \Phi$ on $\partial D$,
		\[
		u(y', \rho(y')) = \Phi(y', \rho(y')) \quad \text{for } |y'| \ll 1.
		\]
		Differentiating once gives
		\[
		u_\alpha + u_n \rho_\alpha = \Phi_\alpha + \Phi_n \rho_\alpha.
		\]
		At $y=0$, because $D\rho(0)=0$, we get $u_\alpha(0) = \Phi_\alpha(0)$. Differentiating again yields
		\[
		u_{\alpha\beta}(0) = \Phi_{\alpha\beta}(0) - u_n(0)\rho_{\alpha\beta}(0).
		\]
		Since $|u_n| \le C$, we obtain $|u_{\alpha\beta}(0)| \le C$.

		For the mixed derivatives.
		Define the linearized operator
		\begin{equation}
			\mathcal{L} := F^{ij}(D^2 u_R)\partial_{ij}. \label{eq:L_mixed}
		\end{equation}
		Since $F(D^2 u_R)=1$ is constant, differentiating the equation gives
		\begin{equation}
			\mathcal{L}(\partial_\alpha u_R)=0 \qquad (\alpha=1,\dots,n). \label{eq:Lder_mixed}
		\end{equation}
		In local coordinates at $p$, we have
		$$D_\delta := \{ x \in \mathbb{R}^n \setminus \overline{D} : |x| < \delta \}.$$
		We first prove a key estimate for a barrier function. 
		Since $w_R$ is a strict subsolution and $u_R$ is a solution of the approximating problem, 
		the comparison principle gives $u_R-w_R>0$ in $\Omega_R$, and Hopf's lemma yields 
		$\partial_\nu(u_R-w_R)>0$ on $\partial D$. 
		We have
		\begin{equation}
			0<F(D^2w_R)-F(D^2u_R)
			\le F^{ij}(D^2u_R)(w_R-u_R)_{ij}
			= \mathcal{L}(w_R-u_R). \label{eq:concavity_zeta_mixed}
		\end{equation}
		Since $\mathcal{L}u_R=0$, this gives $\mathcal{L}w_R>0$, hence
		\begin{equation}
			\mathcal{L}(u_R-w_R)=-\mathcal{L}w_R<0 \quad \text{in } D_\delta. \label{eq:Luw_zeta_mixed}
		\end{equation}
		
		Now set
		\[
		\zeta:=u_R-w_R+td-\frac{N}{2}d^2.
		\]
		Using \eqref{eq:Luw_zeta_mixed}, the smoothness of $d$, 
		$$
		\begin{aligned}
			\mathcal L\left(\frac{d^2}{2}\right)
			&= F^{ij}\partial_i d\,\partial_j d + d\,\mathcal L d \\
			&\ge \lambda_0\sum_i F^{ii} - d\,C_d\sum_i F^{ii} \\
			&= (\lambda_0 - C C_d)\sum_i F^{ii}.
		\end{aligned}$$
		Since $0<d<\delta$ in $D_\delta$, taking $\delta>0$ sufficiently small so that $\lambda_0 - C_d\delta \ge \lambda_0/2$, we obtain
		\begin{equation}
			\mathcal L\left(\frac{d^2}{2}\right) \ge \frac{\lambda_0}{2}\sum_i F^{ii}.
		\end{equation}
		we obtain, for $t>0$ sufficiently small and $N>0$ sufficiently large,
		\begin{equation}
			\begin{aligned}
				\mathcal{L}\zeta
				&= \mathcal{L}(u_R-w_R) + t\mathcal{L}d - N\mathcal{L}\left(\frac{d^2}{2}\right) \\
				&\le tC_d\sum_i F^{ii} - \frac{N\lambda_0}{2}\sum_i F^{ii} \\
				&= \left(tC_d-\frac{N\lambda_0}{2}\right)\sum_i F^{ii} \\
				&\le  -\sum_iF^{ii},
			\end{aligned}
			\label{eq:Lzeta_mixed}
		\end{equation}
		for $tC_d-\frac{N\lambda_0}{2}\leq -1$ and $C_d = n\max_{\partial D}|\kappa_\alpha|$. 
		Moreover, by Hopf's lemma, $u_R-w_R\ge cd$ near $\partial D$. Taking $\delta>0$ sufficiently small, we have $\zeta\ge0$ on $\partial D_\delta$.
		
		Now define the auxiliary function
		\[
		\Psi := A_1\zeta + A_2 d - A_3\sum_{\beta=1}^{n-1}|\partial_\beta(u_R-w_R)|^2,
		\]
		where $A_1,A_2,A_3>0$ are constants to be chosen with $A_1\gg A_2\gg A_3\gg 1$.
		
		We estimate $\mathcal{L}\Psi$ term by term. From \eqref{eq:Lzeta_mixed},
		\[
		\mathcal{L}(A_1\zeta) \le -A_1\sum_i F^{ii}.
		\]
		Since $d$ is smooth, there exists $C_d>0$ such that
		\[
		\mathcal{L}(A_2d) \le A_2 C_d \sum_i F^{ii}.
		\]
		For the last term, set
		\[
		v_\beta:=\partial_\beta(u_R-w_R) \qquad (\beta=1,\dots,n-1).
		\]
		From \eqref{eq:Lder_mixed}, $\mathcal{L}v_\beta = -\mathcal{L}(\partial_\beta w_R)$. 
		Since $w_R=u_{\mathrm{near}}$ near $\partial D$ is fixed and smooth, there exists $C_2>0$ such that
		\[
		|\mathcal{L}v_\beta| \le C_2 \sum_i F^{ii}.
		\]
		By the $C^1$ estimate, $|v_\beta|\le C$, hence
		\[
		|2v_\beta \mathcal{L}v_\beta| \le C_3 \sum_i F^{ii}.
		\]
		By the ellipticity estimate in Proposition \ref{prop}, there exists $c_0>0$ such that
		\[
		\sum_{\beta<n} F^{ij}\partial_i v_\beta \partial_j v_\beta \ge c_0 \sum_i F^{ii}.
		\]
		Therefore,
		\[
		\mathcal{L}\left(\sum_{\beta<n}v_\beta^2\right)
		= 2\sum_{\beta<n}F^{ij}\partial_i v_\beta \partial_j v_\beta
		+ 2\sum_{\beta<n}v_\beta \mathcal{L}v_\beta
		\ge (2c_0 - C_3)\sum_i F^{ii}.
		\]
		Consequently,
		\[
		\mathcal{L}\left(-A_3\sum_{\beta<n}v_\beta^2\right)
		\le -A_3(2c_0 - C_3)\sum_i F^{ii}.
		\]
		Combining the above estimates, we obtain
		\begin{align}
			\mathcal{L}\Psi
			&\le -A_1 \sum_iF^{ii}
			+ A_2C_d\sum_i F^{ii}
			- A_3(2c_0 - C_3)\sum_i F^{ii} \notag \\
			&= -\left(A_1- A_2C_d + A_3(2c_0 - C_3)\right)\sum_i F^{ii}. \label{eq:LPsi_est_mixed}
		\end{align}
		Choosing $A_1\gg A_2+A_3$ so that
		\[
		A_1 - A_2C_d + A_3(2c_0 - C_3) \ge \frac{A_1}{2},
		\]
		we get
		\begin{equation}
			\mathcal{L}\Psi \le -\frac{A_1}{2}\sum_i F^{ii} < 0 \quad \text{in } D_\delta. \label{eq:LPsi_mixed}
		\end{equation}
		On $\partial D\cap B_\delta$, we have $d=0$, $u_R=w_R$, and $\partial_\beta(u_R-w_R)=0$, so $\Psi=0$. 
		On $\partial B_\delta\cap D_\delta$, by Hopf's lemma, $u_R-w_R\ge cd$ near $\partial D$. Hence
		\[
		\zeta = u_R-w_R+td-\frac{N}{2}d^2 \ge cd - \frac{N}{2}d^2 \ge d\left(c-\frac{N}{2}\delta\right)>0
		\]
		by taking $\delta>0$ sufficiently small. Thus $\Psi=A_1\zeta+A_2d-A_3\sum|\partial_\beta v|^2\ge0$ for $A_1$ sufficiently large. 
		Therefore $\Psi\ge0$ on $\partial D_\delta$. By the maximum principle, together with \eqref{eq:LPsi_mixed},
		\begin{equation}
			\Psi \ge 0 \quad \text{in } D_\delta. \label{eq:Psi_bound_mixed}
		\end{equation}
		Now fix $\alpha\in\{1,\dots,n-1\}$. For either choice of sign $\pm$, define
		\[
		\Psi_\pm := \Psi \pm \partial_\alpha(u_R-w_R).
		\]
		We verify that $\Psi_\pm$ satisfies the hypotheses of the maximum principle.
		Indeed, by \eqref{eq:Lder_mixed} and \eqref{eq:LPsi_mixed},
		\[
		\mathcal{L}(\Psi_\pm)
		= \mathcal{L}\Psi \pm \mathcal{L}(\partial_\alpha u_R) \mp \mathcal{L}(\partial_\alpha w_R)
		= \mathcal{L}\Psi \mp \mathcal{L}(\partial_\alpha w_R) < 0 \quad \text{in } D_\delta,
		\]
		after possibly increasing $A_1$ to absorb the term $\mathcal{L}(\partial_\alpha w_R)$.
		On the boundary $\partial D_\delta$, by \eqref{eq:Psi_bound_mixed} and the $C^1$ bound,
		\[
		\Psi_\pm = \Psi \pm \partial_\alpha(u_R-w_R) \ge 0 \quad \text{on } \partial D_\delta.
		\]
		Therefore, by the maximum principle,
		\begin{equation}
			\Psi \pm \partial_\alpha(u_R-w_R) \ge 0 \quad \text{in } D_\delta. \label{eq:Psi_alpha_bound_mixed}
		\end{equation}
		At $p\in\partial D$, we have $\Psi(0)=0$ and $\partial_\alpha(u_R-w_R)(0)=0$. 
		Applying Hopf's lemma to $\Psi_+$ and $\Psi_-$ yields
		\[
		\partial_n\Psi(0) \pm \partial_n\partial_\alpha(u_R-w_R)(0) \ge 0.
		\]
		Thus
		\[
		|u_{\alpha n}(0) - w_{\alpha n}(0)| \le \partial_n\Psi(0).
		\]
		Since $w_R=u_{\mathrm{near}}$ near $\partial D$ is fixed and smooth, and $\Psi$ is composed of $C^1$-bounded functions, we have
		\[
		|u_{\alpha n}(0)| \le |w_{\alpha n}(0)| + |\partial_n\Psi(0)| \le C,
		\]
		where $C$ is independent of $R$. Since $p\in\partial D$ and $\alpha\in\{1,\dots,n-1\}$ are arbitrary,
		\begin{equation}
			|u_{\alpha n}| \le C \quad \text{on } \partial D. \label{eq:mixed_mixed}
		\end{equation}

		We now estimate $u_{nn}(0)$ for $k\leq n-1$. By the boundedness of tangential and mixed derivatives,
		\[
		\Delta u \, I_n - D^2u = 
		\begin{pmatrix}
			\Delta u - u_{11} & -u_{12} & \cdots & -u_{1n} \\
			-u_{21} & \Delta u - u_{22} & \cdots & -u_{2n} \\
			\vdots & \vdots & \ddots & \vdots \\
			-u_{n1} & -u_{n2} & \cdots & \Delta u - u_{nn}
		\end{pmatrix}.
		\]
		It follows that 
		\[
		\eta(D^2 u)(0) = \begin{pmatrix}
			u_{nn} I_{n-1} + S & \xi \\
			\xi^T & \zeta
		\end{pmatrix},
		\qquad |S| + |\xi| + |\zeta| \le C.
		\]
		Admissibility implies $D^2 u(0) \in \widetilde{\Gamma}_1$, so $\Delta u(0) > 0$. Since tangential derivatives are bounded, this gives $u_{nn} \ge -C$.
		
		For the quotient equation, $\sigma_k(\eta) = \sigma_l(\eta)$. For large $u_{nn}$, the leading terms give
		\[
		C_{n-1}^k  u_{nn}^k - C u_{nn}^{k-1} \le C_{n-1}^l  u_{nn}^l + C u_{nn}^{l-1}.
		\]
		Since $k > l$, this yields $u_{nn} \le C$. Hence $|u_{nn}(0)| \le C$. Therefore, $|D^2 u(0)| \le C$.
		
		For $k=n$, set
		\[
		T := \sum_{\alpha < n} u_{\alpha\alpha}(0).
		\]
		Since $\Phi_n(0) = 0$, summing  over $\alpha = 1, \dots, n-1$ gives
		\[
		T = \sum_{\alpha < n} \Phi_{\alpha\alpha}(0) - u_n(0) \sum_{\alpha < n} \rho_{\alpha\alpha}(0).
		\]
		The quantity $-\sum_{\alpha < n} \rho_{\alpha\alpha}(0)$ is the mean curvature of $\partial D$ at $p$ with respect to the outward unit normal of $D$. By strict mean convexity and the lower bound $D_\nu u_R \ge K$ , we obtain
		\[
		0 < c_3 \le T \le C, \qquad c_3 = c_3(\partial D, K) > 0.
		\]
		Also,
		\[
		\eta(D^2 u)(0) = \begin{pmatrix}
			u_{nn} I_{n-1} + S & \xi \\
			\xi^T & T
		\end{pmatrix},
		\qquad |S| + |\xi|+|T| \le C.
		\]
		Admissibility gives $\eta(D^2 u)(0) > 0$, so in particular $u_{nn} \ge -C$. Since $F(D^2 u) = 1$ and $k=n$, we have $\sigma_n(\eta(D^2 u)(0)) = \sigma_l(\eta(D^2 u)(0))$, i.e.
		\[
		\det(\eta(D^2 u)(0)) = \sigma_l(\eta(D^2 u)(0)).
		\]
		\noindent\textbf{Case 1: $l \le n-2$.}
		For the quotient equation, $\sigma_n(\eta) = \sigma_l(\eta)$ with $k=n$.
		For large $u_{nn}$, the block form gives
		\[
		\det(\eta) \ge T (u_{nn} - C)^{n-1},
		\]
		and
		\[
		\sigma_l(\eta) \le C_{n-1}^l u_{nn}^l + O(u_{nn}^{l-1}).
		\]
		Thus
		\[
		T (u_{nn} - C)^{n-1} \le C_{n-1}^l u_{nn}^l + O(u_{nn}^{l-1}).
		\]
		Since $l \le n-2$ and $T \ge c_3 > 0$, the left-hand side is of order $u_{nn}^{n-1}$ while the right-hand side is of order $u_{nn}^l$ with $l < n-1$. This is impossible for $u_{nn}$ sufficiently large. Hence $u_{nn} \le C$.

		\noindent\textbf{Case 2: $l=n-1$.}
		Using the block form above,
		\[
		\det(\eta(D^2 u)(0)) = T \det\left( u_{nn} I_{n-1} + S - T^{-1} \xi \xi^T \right).
		\]
		Since $T$ is bounded above and below by positive constants, and $S - T^{-1}\xi\xi^T$ is uniformly bounded, the left-hand side grows like
		Set $M:=S-T^{-1}\xi\xi^T$. Then
		\[
		\det(\eta)=T\det(u_{nn}I_{n-1}+M).
		\]
		Expanding the determinant,
		\begin{equation}
			\det(u_{nn} I_{n-1} + M)
			=
			u_{nn}^{n-1}
			+
			\sigma_1(M) u_{nn}^{n-2}
			+
			\sigma_2(M) u_{nn}^{n-3}
			+
			\cdots
			+
			\sigma_{n-1}(M).
		\end{equation}
		Hence
		\begin{equation}
			\det(\eta)
			=
			T u_{nn}^{n-1}
			+
			T\sigma_1(M) u_{nn}^{n-2}
			+
			T\sigma_2(M) u_{nn}^{n-3}
			+
			\cdots
			+
			T\sigma_{n-1}(M).
		\end{equation}
		On the other hand,
		\begin{equation}
			\sigma_{n-1}(\eta)
			=
			u_{nn}^{n-1}
			+
			\big((n-1)T+\operatorname{tr}(S)\big)u_{nn}^{n-2}
			+
			O(u_{nn}^{n-3}).
		\end{equation}
		Suppose, for contradiction, that $u_{nn}$ is unbounded on $\partial D$. 
		Then there exists a sequence of points $p_j \in \partial D$ such that
		\begin{equation}
			u_{nn}(p_j) \to \infty.
		\end{equation}
		By compactness of $\partial D$, passing to a subsequence if necessary, 
		we may assume $p_j \to p_0$ for some $p_0 \in \partial D$.
		
		For each $j$, the block decomposition at $p_j$ gives
		\[
		\eta(D^2u)(p_j)=\begin{pmatrix}
			u_{nn}(p_j)I_{n-1}+S_j & \xi_j\\
			\xi_j^T & T_j
		\end{pmatrix},
		\]
		where $T_j:=\sum_{\alpha<n}u_{\alpha\alpha}(p_j)$ satisfies $0<c_3\le T_j\le C$, and $|S_j|+|\xi_j|\le C$ by the tangential derivative estimates.
		
		From the expansions
		\begin{equation}
			\det(\eta_j)
			=
			T_j u_{nn}^{n-1}(p_j)
			+
			\bigl(T_j\operatorname{tr}(S_j)-|\xi_j|^2\bigr)u_{nn}^{n-2}(p_j)
			+
			O(u_{nn}^{n-3}(p_j)).
		\end{equation}
		and
		\begin{equation}
			\sigma_{n-1}(\eta_j)
			=
			u_{nn}^{n-1}(p_j)
			+
			\bigl((n-1)T_j+\operatorname{tr}(S_j)\bigr)u_{nn}^{n-2}(p_j)
			+
			O(u_{nn}^{n-3}(p_j)).
		\end{equation}
		together with $\det(\eta_j)=\sigma_{n-1}(\eta_j)$, comparing the $u_{nn}^{n-1}(p_j)$-terms yields $T_j\to1$. Then comparing the $u_{nn}^{n-2}(p_j)$-terms yields
		\[
		\operatorname{tr}(S_j)-|\xi_j|^2 = (n-1)T_j+\operatorname{tr}(S_j)+o(1).
		\]
		Since $T_j\to1$ and $S_j,\xi_j$ are bounded, passing to a subsequence if necessary, we may assume $S_j\to S_0$ and $\xi_j\to\xi_0$. Taking the limit gives
		\[
		\operatorname{tr}(S_0)-|\xi_0|^2 = (n-1)+\operatorname{tr}(S_0),
		\]
		which implies
		\begin{equation}
			-|\xi_0|^2 = n-1.
		\end{equation}
		This is impossible since the left-hand side is non-positive while the right-hand side is positive for $n\ge3$. Hence $u_{nn}$ cannot tend to infinity, so $u_{nn}\le C$ on $\partial D$. Therefore $|D^2u_R|\le C$ on $\partial D$.
		\paragraph{Outer boundary $\partial E_R$.}
		We rescale the outer boundary to a fixed ellipsoid. Set
		\[
		\widehat{E} := \{ y \in \mathbb{R}^n : s(y) < 1 \}, \qquad y := R^{-1/2} x,
		\qquad \widetilde{u}_R(y) := R^{-1} u_R(\sqrt{R} y), \qquad \widehat{\Omega}_R := \widehat{E} \setminus R^{-1/2} \overline{D}.
		\]
		Then $D_y^2 \widetilde{u}_R(y) = D_x^2 u_R(\sqrt{R} y)$, so $\widetilde{u}_R$ satisfies the same quotient equation
		\[
		F(D_y^2 \widetilde{u}_R) = 1 \quad \text{in } \widehat{\Omega}_R.
		\]
		The rescaled comparison functions satisfy $\widetilde{q}_R \le \widetilde{u}_R \le \widetilde{\bar{u}}_R$ and all three are constant on $\partial \widehat{E}$. By the same argument as on the inner boundary, we obtain
		$|D^2 \widetilde{u}_R|\le C$ on $ \partial \widehat{E}.$
		Scaling back,we obtain
		\begin{equation}
			|D^2 u_R| \le C \quad \text{on } \partial E_R.
		\end{equation}
	\end{proof}
	
	\subsection{Global \(C^2\) estimate}
	
	\begin{proposition}
		There exists $C > 0$, independent of $R$, such that
		\[
		|D^2 u_R| \le C \quad \text{in } \overline{\Omega_R}.
		\]
	\end{proposition}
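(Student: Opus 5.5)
We reduce the global $C^2$ bound to the boundary bound of the previous proposition by a maximum principle for second derivatives, exploiting the concavity of $F$ and the constant coefficients of the equation. The natural quantity is a pure second derivative $u_{\xi\xi}$ for a unit vector $\xi$. Differentiating $F(D^2u_R)=1$ twice in the direction $\xi$ gives
\[
F^{ij}(D^2u_R)\,(u_R)_{ij\xi\xi} + F^{ij,pq}(D^2u_R)\,(u_R)_{ij\xi}(u_R)_{pq\xi} = 0 .
\]
Since $F$ is concave on the admissible set (Proposition \ref{prop}(3), composed with the linear map $\eta$), the second term is nonpositive. Hence $\mathcal{L}(u_\xi{}_\xi)\ge 0$ with $\mathcal{L}=F^{ij}(D^2u_R)\partial_{ij}$. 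There is no dependence on $x$, $u$ or $Du$, so no lower order terms appear. By the weak maximum principle on the bounded domain $\Omega_R$,
\[
\max_{\overline{\Omega_R}} (u_R)_{\xi\xi} = \max_{\partial\Omega_R}(u_R)_{\xi\xi}\le C
\]
for every unit $\xi$, with $C$ the $R$-independent boundary constant from the boundary second derivative estimate. Taking the maximum over $\xi$ bounds the largest eigenvalue $\lambda_{\max}(D^2u_R)\le C$ uniformly.

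Next we control the eigenvalues from below using admissibility. Since $\lambda(D^2u_R)\in\widetilde{\Gamma}_k\subset\widetilde\Gamma_1=\Gamma_1$, we have $\Delta u_R>0$. Therefore
\[
\lambda_{\min}\ge -\sum_{j\neq \min}\lambda_j\ge -(n-1)C .
\]
So every eigenvalue lies in $[-(n-1)C,C]$, and $|D^2u_R|\le nC$ in $\overline{\Omega_R}$, independent of $R$. If one prefers to avoid $\Gamma_1$, the same lower bound follows from $\sigma_1(\eta)=(n-1)\Delta u_R>0$.

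The main obstacle is justifying concavity on the correct set. Proposition \ref{prop}(3) gives concavity of $F$ only on $\widetilde\Gamma_k$, so we must make sure the path $D^2u_R(x)$ stays in the open cone. It does, because $u_R$ is a smooth admissible solution from Guan's theorem and $F=1>0$ forces $\sigma_k(\eta)>0$. We also need $u_R\in C^4(\Omega_R)\cap C^2(\overline{\Omega_R})$ so that the maximum principle applies up to the boundary. Interior smoothness comes from Evans--Krylov and Schauder theory. Boundary regularity in $C^{2}(\overline{\Omega_R})$ is part of Guan's existence theorem for smooth data: $\partial D$ and $\partial E_R$ are smooth and the boundary data are smooth.

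One subtle point to double-check is the uniformity in $R$ of the outer boundary bound after rescaling. The rescaled domain $\widehat\Omega_R$ has an inner hole $R^{-1/2}\overline D$ shrinking to a point. The estimate on $\partial\widehat E$ should only use a fixed neighborhood of $\partial\widehat E$, where the barriers $\widetilde q_R$ and $\widetilde{\bar u}_R$ are uniformly controlled. We take that step as given from the preceding proposition.
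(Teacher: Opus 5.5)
Your proof is correct and follows the paper's approach. Concavity of $F$ makes second derivatives of $u_R$ subsolutions of the linearized operator, the maximum principle transfers the boundary $C^2$ bound to the interior, and admissibility gives the bound from the other side. The only difference is the quantity and the admissibility condition used. The paper applies the argument to $\Delta u_R$ and then uses $\sigma_2(\eta(D^2u_R))\ge 0$, i.e.\ $|\eta(D^2u_R)|\le\sigma_1(\eta(D^2u_R))$, to control the full Hessian. You apply it to each pure derivative $(u_R)_{\xi\xi}$ and then need only $\Delta u_R>0$, which is slightly more economical since it never uses $\eta(D^2u_R)\in\Gamma_2$.
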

	
	\begin{proof}
		Let $L_R := F^{ij}(D^2 u_R) \partial_{ij}$. Since $F(D^2 u_R) = 1$, differentiating twice and using the concavity of $F$ gives
		\begin{equation}
			L_R(\Delta u_R) \ge 0 \quad \text{in } \Omega_R.
		\end{equation}
		By the maximum principle, $\Delta u_R$ attains its maximum on $\partial \Omega_R$. Since $|D^2 u_R| \le C$ on $\partial \Omega_R$, we have $\Delta u_R \le C$ in $\Omega_R$.
		
		Since $u_R$ is $\widetilde{\Gamma}_k$-admissible, $\eta(D^2 u_R) \in \Gamma_k \subset \Gamma_2$, so $\sigma_2(\eta(D^2 u_R)) \ge 0$. Using the identity
		\[
		|\eta|^2 = \sigma_1(\eta)^2 - 2\sigma_2(\eta),
		\]
		we obtain
		\[
		|\eta(D^2 u_R)|^2 \le \sigma_1(\eta(D^2 u_R))^2.
		\]
		Since $\sigma_1(\eta(D^2 u_R)) = (n-1)\Delta u_R \le C$, we get $|\eta(D^2 u_R)| \le C$. Finally, since $D^2 u_R = \Delta u_R I - \eta(D^2 u_R)$, we conclude that $|D^2 u_R| \le C$ in $\overline{\Omega_R}$.
	\end{proof}
	
	\subsection{Passage to the limit}
	
	We now let $R \to \infty$. First, we show that the sequence $\{u_R\}$ is monotone. For $R_2 > R_1$, on $\partial D$ we have $u_{R_2} = u_{R_1} = \varphi$; on $\partial E_{R_1}$, since $\bar{u}_R$ is decreasing in $R$,
	\[
	u_{R_2} \le \bar{u}_{R_2} < \bar{u}_{R_1} = u_{R_1}.
	\]
	Thus $u_{R_2} \le u_{R_1}$ on $\partial \Omega_{R_1}$. By the comparison principle, $u_{R_2} \le u_{R_1}$ in $\Omega_{R_1}$. Hence $\{u_R\}$ is decreasing as $R$ increases.
	
	Fix a compact set $K \Subset \mathbb{R}^n \setminus \overline{D}$. By the global $C^2$ estimate, $\|u_R\|_{C^2(K)} \le C$ for all sufficiently large $R$. By the Arzel\`a--Ascoli theorem, there exists a subsequence converging in $C^2(K)$ to a limit $u$. Since the whole sequence $\{u_R\}$ is monotone, the entire sequence converges to the same limit.
	
	Let
	\[
	u(x) := \lim_{R \to \infty} u_R(x), \qquad x \in \mathbb{R}^n \setminus \overline{D}.
	\]
	Then $u \in C^2(\mathbb{R}^n \setminus \overline{D})$. Moreover, by the Evans--Krylov theorem and standard Schauder estimates, $u \in C^\infty(\mathbb{R}^n \setminus \overline{D})$.
	Passing to the limit in the equation gives
	\[
	F(D^2 u) = 1 \quad \text{in } \mathbb{R}^n \setminus \overline{D}.
	\]
	Passing to the limit in the boundary condition gives
	\[
	u = \varphi \quad \text{on } \partial D.
	\]
	Finally, taking $R \to \infty$ in the $C^0$ estimate $\underline{u} \le u_R \le \bar{u}_R$, we obtain
	\begin{equation}
		\underline{u}(x) \le u(x) \le Q_c(x) \quad \text{for } |x| \gg 1.
	\end{equation}
	Together with the far-field behavior of $\underline{u}$, this yields
	\begin{equation}
		Q_c(x) - C|x|^{-\beta} \le u(x) \le Q_c(x) \quad \text{for } |x| \gg 1.
	\end{equation}
	
	\section{Asymptotic expansion and uniqueness}
	\subsection{Asymptotic expansion}
	We recall the following lemma of Li--Li--Yuan \cite{LLY20}:
	
	\begin{lemma}\label{exter}
		Let $u \in C^\infty(\mathbb{R}^n\setminus \overline{D})$ be a smooth admissible solution of $F(D^2u)=1$ in an exterior domain, where $F$ is uniformly elliptic and concave on the admissible cone. Suppose
		\[
		\|D^2u\|_{L^\infty(\mathbb{R}^n\setminus \overline{D})} \le C.
		\]
		Then there exists a unique quadratic polynomial $Q$ such that
		\[
		|D^m(u-Q)(x)| \le C_m |x|^{2-n-m} \quad \text{as } |x| \to \infty,
		\]
		for every $m\ge 0$.
	\end{lemma}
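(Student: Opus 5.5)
This is the exterior Liouville theorem of Li--Li--Yuan, which the paper quotes rather than proves. If I were to reprove it, I would follow the standard three-step scheme: blow-down, linearization, and Kelvin-type decay.

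\textbf{Step 1: blow-down and quadratic limit at infinity.} For $R\to\infty$ set $u_R(y):=R^{-2}u(Ry)$. This function is defined on $\mathbb{R}^n\setminus R^{-1}\overline D$, solves $F(D^2u_R)=1$, and satisfies $|D^2u_R|\le C$. By concavity and uniform ellipticity, Evans--Krylov gives uniform $C^{2,\alpha}$ bounds on compact subsets of $\mathbb{R}^n\setminus\{0\}$. Hence subsequences converge to an entire solution away from the origin. The Hessian bound lets the singularity be removed, since $C^{1,1}$ functions have no point singularities for concave uniformly elliptic equations. Pogorelov/Liouville for concave $F$ (Evans--Krylov scaling on $\mathbb{R}^n$) then says the limit is a quadratic $\tfrac12 y^TAy$ with $F(A)=1$.

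To upgrade to $D^2u\to A$ pointwise with a rate, I would use the standard Caffarelli-type argument. Apply the $C^{2,\alpha}$ estimate on dyadic annuli $\{R<|x|<2R\}$ to $u_R$. This yields $|D^2u(x)-A_R|\le C|x|^{-\alpha}$ with $A_R$ Cauchy in $R$, hence a unique limit matrix $A$ and $|D^2u-A|=O(|x|^{-\alpha})$.

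\textbf{Step 2: linearization.} Write $w:=u-\tfrac12x^TAx$. Then
\[
a_{ij}(x)w_{ij}=0,\qquad a_{ij}=\int_0^1F^{ij}\bigl(A+tD^2w\bigr)\,dt,
\]
with $a_{ij}\to F^{ij}(A)$ at rate $|x|^{-\alpha}$; after differentiating, $a_{ij}$ also has decaying Hölder norms on annuli. After the linear change of variables normalizing $F^{ij}(A)$ to $\delta_{ij}$, this is a perturbation of the Laplacian, $\Delta w=f$ with $|f|\le C|x|^{-\alpha}|D^2w|$. I would bootstrap the decay. First, iterating the annulus Schauder estimates gives $D^2w=O(|x|^{-\alpha'})$ for increasing exponents. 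Next, solving $\Delta$ on the exterior with a Newton-potential representation splits $w=b\cdot x+c+h+O(\text{faster})$, where $h$ is harmonic and decays. The general theory of exterior harmonic functions then gives $h=O(|x|^{2-n})$ plus the affine part. Absorbing $b\cdot x+c$ into $Q$ gives $|u-Q|\le C|x|^{2-n}$.

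\textbf{Step 3: derivatives.} Apply interior Schauder estimates for the linearized equation on balls $B_{|x|/2}(x)$, with $C^{m,\alpha}$ coefficient bounds supplied by Evans--Krylov and higher regularity, to the rescaling of $u-Q$. Scaling then gives $|D^m(u-Q)|\le C_m|x|^{2-n-m}$. Uniqueness of $Q$ is immediate, since two quadratics differing by $o(1)$ are equal.

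The main obstacle is Step 2 when $n=3$, or more generally at exponent thresholds. The error term $f$ must be shown integrable enough that the correction is $O(|x|^{2-n})$ rather than $O(|x|^{2-n}\log|x|)$. I would handle this by iterating the decay in steps of $\alpha$ until the quadratic error $|D^2w|^2\lesssim|x|^{-2n}$ lies strictly below the Green-function threshold.
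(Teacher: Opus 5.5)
\textbf{There is a genuine gap in your Step 1.} The paper gives no proof of this lemma; it only quotes Li--Li--Yuan, so your outline has to stand on its own. Its overall architecture is the right one: blow-down, linearization around the limiting Hessian, potential-theoretic decay, scaled Schauder. But the step everything rests on, the end of Step 1, is not proved. The Evans--Krylov estimate is scale invariant, since $D^2u_R(y)=D^2u(Ry)$.

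\begin{itemize}
\item Applied to $u_R$ on $\{1<|y|<2\}$ and scaled back, it gives $[D^2u]_{C^\alpha(\{R<|x|<2R\})}\le CR^{-\alpha}$. Hence on that annulus $|D^2u(x)-A_R|\le CR^{-\alpha}|x-x_R|^{\alpha}\le C$. This is boundedness, not $C|x|^{-\alpha}$, and $|A_{2R}-A_R|\le C$ is not summable.
\item Compactness of blow-downs improves this only to $\operatorname{osc}_{\{R<|x|<2R\}}D^2u\to0$, with no rate. Nothing in your argument excludes $A_R$ drifting slowly along $\{F=1\}$, e.g.\ $|A_{2R}-A_R|\approx 1/\log R$. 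That is, different blow-down subsequences could converge to different quadratics.
\item Interior estimates pass information from larger balls to smaller ones, never from scale $R$ to scale $2R$. So no iteration on annuli can close this.
\end{itemize}

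Consequently neither $\lim_{|x|\to\infty}D^2u$ nor the rate $a_{ij}\to F^{ij}(A)$ used in Step 2 is established, and your bootstrap never starts. The same objection applies to ``iterating the annulus Schauder estimates gives increasing exponents'': any gain in decay must come from the exterior potential representation, not from scale-invariant local estimates. This, not the $n=3$ logarithm, is the real obstacle.

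\textbf{Missing ingredient: a global maximum-principle argument.} Concavity is the natural tool. Differentiating $F(D^2u)=1$ twice in a unit direction $e$ and using concavity gives $L_u u_{ee}\ge0$, where $L_u:=F^{ij}(D^2u)\partial_{ij}$.

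\emph{Existence of the limit.} Suppose $D^2u\to A$ uniformly on the spheres $|x|=R_k$ along a blow-down sequence. The maximum principle on $\{R_k<|x|<R_m\}$ then gives $\limsup_{|x|\to\infty}u_{ee}\le A_{ee}$. Hence $A_{ee}=\limsup_{|x|\to\infty}u_{ee}$ for every unit $e$ and every subsequential limit $A$. A symmetric matrix is determined by its quadratic form, so $A$ is unique and $D^2u\to A$.

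\emph{A power rate.} Put $w=u-\tfrac12x^TAx$ and work in coordinates with $F^{ij}(A)=\delta_{ij}$.
\begin{itemize}
\item Each $w_{ee}$ is an $L_u$-subsolution tending to $0$.
\item For $0<\gamma<n-2$, $|x|^{-\gamma}$ is a strict $L_u$-supersolution for $|x|$ large, because $F^{ij}(D^2u)\to\delta_{ij}$. Comparison therefore gives $w_{ee}\le C|x|^{-\gamma}$.
\item Concavity also gives $\Delta w=F^{ij}(A)w_{ij}\ge F(D^2u)-F(A)=0$. This converts the one-sided bounds into $|D^2w|\le C|x|^{-\gamma}$.
\end{itemize}
From there your Step 2 goes through, using the quadratic error $|\Delta w|\le C|D^2w|^2$ and the Newton potential, and so does your Step 3.
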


	By the global $C^2$ estimate, $\|D^2 u\|_{L^\infty(\mathbb{R}^n \setminus \overline{D})} \le C$. 
	Moreover, by the Proposition\eqref{prop}, the quotient operator $F$ is uniformly elliptic 
	and concave on the admissible cone. Hence the Lemma\eqref{exter} applies to $u$, yielding a quadratic polynomial $Q$ such that
	\[
	|D^m(u-Q)(x)| \le C_m |x|^{2-n-m} \quad \text{as } |x| \to \infty,
	\]
	for every $m \ge 0$. In particular, for $m=0$,
	\[
	|u(x)-Q(x)| \le C_0 |x|^{2-n} \quad \text{as } |x| \to \infty,
	\]
	and for $m\ge 1$,
	\[
	|D^m(u-Q)(x)| \le C_m |x|^{2-n-m} \quad \text{as } |x| \to \infty.
	\]
	Since $u - Q_c = o(1)$ as $|x| \to \infty$, we must have $Q \equiv Q_c$. Hence
	\begin{equation}
		|D^m(u-Q_c)(x)| \le C_m |x|^{2-n-m} \quad \text{as } |x| \to \infty,
	\end{equation}
	for every $m \ge 0$.
	\subsection{Uniqueness}
	Let $u, v$ be two $\widetilde{\Gamma}_k$-admissible solutions of \eqref{eq:main} with the same boundary data $\varphi$ on $\partial D$. Set $w:=u-v$. Then $w=0$ on $\partial D$ and $w(x)=o(1)$ as $|x|\to\infty$.
	
	By the mean value theorem, for any bounded domain $\Omega\subset\mathbb{R}^n\setminus\overline{D}$,
	\[
	a^{ij}(x)w_{ij}=0 \quad \text{in } \Omega,
	\]
	where
	\[
	a^{ij}(x):=\int_0^1 F^{ij}(D^2u+t(D^2v-D^2u))\,dt.
	\]
	We claim that $a^{ij}$ is uniformly elliptic on $\Omega_R\setminus\overline{U_\delta}$. Indeed, for $2\le k\le n-1$, since $\eta$ is linear and $\Gamma_k$ is convex,
	\[
	\eta(tD^2u+(1-t)D^2v)
	=
	t\eta(D^2u)+(1-t)\eta(D^2v)
	\in \Gamma_k,\qquad 0\le t\le 1.
	\]
	For $k=n$, since $\eta(D^2u)>0$, $\eta(D^2v)>0$, and the positive definite cone is convex,
	\[
	\eta(tD^2u+(1-t)D^2v)>0,\qquad 0\le t\le 1.
	\]
	Thus in both cases the matrices $tD^2u+(1-t)D^2v$ lie in the elliptic region of $F$ for all $0\le t\le 1$. Since this region is open and the relevant sets are compact, there exist uniform ellipticity constants $\lambda,\Lambda>0$, independent of $R$, such that
	\[
	\lambda |\xi|^2 \le a^{ij}(x)\xi_i\xi_j \le \Lambda |\xi|^2
	\]
	for all $x\in\Omega_R\setminus\overline{U_\delta}$ and all $\xi\in\mathbb{R}^n$.
	
	Fix $\epsilon>0$. Since $w=0$ on $\partial D$, there exists $\delta>0$ such that
	\[
	|w|\le\epsilon \quad \text{on } \{x\in\mathbb{R}^n\setminus\overline{D}:d(x)=\delta\}.
	\]
	Since $w(x)=o(1)$ as $|x|\to\infty$, there exists $R>1$ such that
	\[
	|w|\le\epsilon \quad \text{on } \partial E_R.
	\]
	Applying the maximum principle to $w$ and $-w$ in the bounded domain
	\[
	\Omega_R\setminus\overline{U_\delta},\qquad U_\delta:=\{x\in\mathbb{R}^n\setminus\overline{D}:0<d(x)<\delta\},
	\]
	we obtain
	\[
	|w|\le\epsilon \quad \text{in } \Omega_R\setminus\overline{U_\delta}.
	\]
	Letting $R\to\infty$ and then $\epsilon\to0$ (hence $\delta=\delta(\epsilon)\to0$), we conclude $w\equiv0$ in $\mathbb{R}^n\setminus\overline{D}$. Hence $u\equiv v$.
	
	\section*{Acknowledgments}
	
	This work was partially supported by the National Natural Science Foundation of China 
	(Nos. 12171389 and 11801015).
	
	\section*{Conflict of Interest}
	The authors declare that there is no conflict of interest.

	\section*{Data Availability}
	No data were generated or analyzed in this study.

\end{document}